\newfont{\msbm}{msbm10 scaled\magstephalf}
\newtheorem{theorem}{Theorem}[section]
\newtheorem{corollary}[theorem]{Corollary}
\newtheorem{lemma}[theorem]{Lemma}
\newtheorem{definition}[theorem]{Definition}
\newtheorem{claim}[theorem]{Claim}
\newtheorem{conjecture}[theorem]{Conjecture}
\def\b1K{\mbox{\boldmath $K$}_{-1}}
\def\bK{\mbox{\boldmath $K$}}
\newbox\noforkbox \newdimen\forklinewidth
\noforkbox\hbox{\lower 2pt\box1\lower
2pt\box0\relax}
\newcommand{\EE}{\mbox{\msbm E}}
\def\sub'm{\prec_{\bK'}}
\def\grpf #1 #2{{\rm grp}_{#2}(#1)}
\def\fldf #1 #2{{\rm fld}_{#2}(#1)}
\def\dclf #1 #2{{\rm dcl}_{#2}(#1)}
\def\rclf #1 #2{{\rm rcl}_{#2}(#1)}
\def\aclf #1 #2{{\rm acl}_{#2}(#1)}
\def\acff #1 #2{{\rm acf}_{#2}(#1)}
\def\strf #1 #2{{\rm str}_{#2}(#1)}
\def\tclf #1 #2{{\rm acf}_{#2}(#1)}
\def\hbar{{\bf h}}
\date{\today}
\newcommand{\F}{\mathcal{F}}
\newcommand{\Y}{\mathcal{Y}}
\newcommand{\e}{\mathrm{e}}
\newcommand{\vr}{\mathrm{v}}
\newcommand{\Hc}{\mathcal{H}}
\newcommand{\ve}{\varepsilon}
\newcommand{\ex}{\mathrm{ex}}
\newtheorem{prob}{Problem}
\newenvironment{proof}{\paragraph{Proof:}}{\hfill$\square$\\}
\begin{document}

\title{Supersaturation via edge-gluing}
\author{Zihao Jin\footnote{Dept. of Mathematics, University of San Diego,  San Diego, CA 92093, USA {\tt zij013@ucsd.edu}}, Sean Longbrake \footnote{Dept. of Mathematics, Emory University,  Atlanta, GA 30322, USA {\tt sean.longbrake@emory.edu}}, Liana Yepremyan \footnote{Dept. of Mathematics, Emory University,  Atlanta, GA 30322, USA {\tt lyeprem@emory.edu}}
\thanks{The research of the first author was partially supported by the independent research program for undergraduate students at Emory University. The research of the second author partially and the third author fully are supported by the National Science Foundation grant DMS-2247013: Forbidden and Colored Subgraphs.} }

\maketitle

\abstract{
In 1984, Erd\H{o}s and Simonovits conjectured the following: given a bipartite graph $H$, there exist constants $\beta, C > 0$ such that any graph $G$ on $n$ vertices and $pn^2\geq C \ex(n, H)$  edges contains at least $\beta n^{\vr(H)} p^{\e(H)}$ copies of $H$. We show that edge-gluing preserves the satisfiability of this conjecture under some mild symmetry conditions. Namely, if two graphs $H_1$ and $H_2$ satisfy this conjecture, and if furthermore, gluing them along a fixed edge produces a unique graph then the resulting graph satisfies the conjecture as well. 

In the same paper, Erd\H{o}s and Simonovits conjectured a weaker statetement: for every $H$, there is some $\alpha, \beta, C > 0$ such that any graph $G$ on $n$ vertices and $pn^2\geq C n^{1+ \alpha}$  edges contains at least $\beta n^{\vr(H)} p^{\e(H)}$ copies of $H$. We show that if $H$ satisfies this conjecture then by gluing several copies of labeled $H$ along the same copy of a subforest of $H$ produces a graph that also satisfies the conjecture.
 }

\maketitle
\section{Introduction}

 The \emph{Tur\'an number} of a graph $H$, denoted by $\ex(n, H)$, is the maximum possible number of edges in an $n$-vertex graph that does not contain a copy of $H$.  The famous
Erd\H{o}s-Simonovits-Stone theorem states that any graph $H$ satisfies $$\ex(n,H) = \left(1-\frac{1}{\chi(H)-1}\right){n \choose 2} +o(n^2),$$ where $\chi(H)$ is the chromatic number of $H$. Thus the Tur\'an number of any graph $H$ is asymptotically determined, except when $\chi(H)=2$, that is, when the graph is bipartite.  For bipartite $H$, even the order of magnitude of $\ex(n,H)$  is known only for very few graphs/families of graphs; for example, the only cycles we know the right order of magnitude for are $C_4$, $C_6$ and $C_{10}$.

Given our limited knowledge of the bipartite Tur\'an numbers, it is natural to study how certain graph operations affect these numbers. To this end, Erd\H{o}s and Simonovits~\cite{erdos1970some} showed that given a bipartite graph $H$ with $\ex(n, H) = O(n^\alpha)$ and bipartition $(A,B)$, the graph $H'$ formed by joining $A$ to one side of a complete bipartite graph, $K_{t, t}$ and joining $B$ to the other, satisfies $\ex(n, H') = O(n^{\beta})$ where $\beta$ is the unique solution to $\frac{1}{\alpha} = \frac{1}{\beta} + t$. Faudree and Simonovits~\cite{faudree1983class} showed that if a graph $H$ satisfies $\ex(n, H) = O(n^{2 - \alpha})$, then the graph formed by identifying the leaves of the $(k-1)$-subdivision of the star $K_{1, s}$ to one of the sides of $H$ satisfies that $\ex(n, H') = O(n^{2 - \beta})$ for $\beta = \frac{\alpha + \alpha^2 + \dots + \alpha^{ k-  2}}{1 + \alpha + \alpha^2 + \dots + \alpha^{ k-  2}}$, where a $k$-subdivision of a graph is obtained by replacing all edges with internally vertex disjoint paths of length $k+1$. In a similar vein, Kupavskii, Polyanskii, Tomon, and Zakharov \cite{kupavskii2022extremal} considered the operation of \emph{gluing} two edges for any two $r$-uniform hypergraphs, when we identify two edges as the same. Note that the gluing operation produces a family of graphs, and not a single graph. In particular for $r=2$, this family has size two. The following result is implicit in their proof of Lemma 4.1, where instead of considering gluing along two fixed edges, they glue along every possible pair of edges. 

\begin{theorem}[Lemma 4.1 in~\cite{kupavskii2022extremal}]
Let $H_1$ and $H_2$ be two bipartite graphs with $\ex(n, H_1)~=~O(n^{1 + \alpha})$ and $\ex(n, H_2)~=~O(n^{1 + \beta})$. Fix $e_1 \in E(H_1)$ and $e_2 \in E(H_2)$. Then, the family $\Hc$ of graphs formed by identifying $e_1$ and $e_2$ satisfies
$$\ex(n, \Hc) = O(n^{1 + \max\{\alpha, \beta\}}).$$
\end{theorem}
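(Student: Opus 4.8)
The plan is to establish the bound directly: there is a constant $C$, depending only on $H_1,H_2,e_1,e_2$, such that every $n$-vertex graph $G$ with $e(G)\ge Cn^{1+\gamma}$ (for all $n$ past a threshold, which is all one needs) contains a member of $\Hc$, where $\gamma:=\max\{\alpha,\beta\}$; this is exactly $\ex(n,\Hc)<Cn^{1+\gamma}$. Write $e_1=a_1b_1$, $e_2=a_2b_2$, and call an edge $uv$ of a graph an \emph{$H_i$-root} if some embedding of $H_i$ into that graph sends $\{a_i,b_i\}$ onto $\{u,v\}$. The first ingredient is a routine ``many roots'' lemma: if $\ex(m,H)\le Dm^{1+\delta}$ for all large $m$ and $G$ has $n$ vertices with $e(G)\ge 2Dn^{1+\delta}$, then at least $\tfrac12 e(G)$ edges of $G$ are $H$-roots --- for otherwise the more than $\tfrac12 e(G)\ge Dn^{1+\delta}\ge\ex(n,H)$ non-root edges would form a subgraph containing a copy of $H$, whose image of $ab$ would be a non-root edge that is simultaneously an $H$-root, a contradiction. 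Fix $D_1,D_2$ with $\ex(m,H_i)\le D_im^{1+\alpha_i}$ for all large $m$ (here $\alpha_1=\alpha$, $\alpha_2=\beta$), put $r_1:=|V(H_1)|-2$, and let $C:=1+\max\{2D_1,\ 2^{r_1+3}D_2\}$.

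Now assume $e(G)\ge Cn^{1+\gamma}$ while $G$ contains no member of $\Hc$; we shall reach a contradiction. Since $Cn^{1+\gamma}\ge 2D_1n^{1+\alpha}$, the many-roots lemma produces a set $R$ of $H_1$-roots of $G$ with $|R|\ge\tfrac12 e(G)$. For each $uv\in R$ fix one embedding $\phi^{uv}$ of $H_1$ with $\{\phi^{uv}(a_1),\phi^{uv}(b_1)\}=\{u,v\}$, and call $W(uv):=\phi^{uv}(V(H_1))\setminus\{u,v\}$ its \emph{blob}, a set of $r_1$ vertices disjoint from $\{u,v\}$. The crucial observation is that, for every $uv\in R$, there is \emph{no} embedding $\psi$ of $H_2$ into $G$ with $\{\psi(a_2),\psi(b_2)\}=\{u,v\}$ and $\psi(V(H_2))\cap W(uv)=\emptyset$: such a $\psi$ would meet $\phi^{uv}$ in precisely the two vertices $u,v$ and precisely the edge $uv$, so the union of the two copies --- with exactly the images of $E(H_1)$ and of $E(H_2)$ --- would be $H_1$ and $H_2$ amalgamated along $e_1$ and $e_2$ (with whichever of the two endpoint-identifications the embeddings realize; in either case a member of $\Hc$), sitting inside $G$.

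It remains to manufacture such a forbidden $H_2$-copy, and this is where the requirement that the two copies overlap only in the glued edge --- the heart of the argument --- gets handled. Put each vertex of $G$ into a random set $Z$ independently with probability $\tfrac12$, and set $F:=\{uv\in R:W(uv)\subseteq Z,\ u\notin Z,\ v\notin Z\}$. Since $W(uv)$, $\{u\}$, $\{v\}$ are pairwise disjoint, $\Pr[uv\in F]=2^{-r_1-2}$, so the expectation of $|F|$ is at least $2^{-r_1-2}|R|\ge 2^{-r_1-3}e(G)\ge 2^{-r_1-3}Cn^{1+\gamma}$; fix an outcome of $Z$ with $|F|$ at least this large. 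Every edge of $F$ has both endpoints outside $Z$, so $(V(G)\setminus Z,\,F)$ is a subgraph of $G-Z$ on at most $n$ vertices with at least $2^{-r_1-3}Cn^{1+\gamma}\ge 2^{-r_1-3}Cn^{1+\beta}$ edges; since $2^{-r_1-3}C>D_2$, this exceeds $\ex(|V(G)\setminus Z|,H_2)$, so $(V(G)\setminus Z,F)$ contains a copy of $H_2$. The edge onto which that copy sends $e_2$ is some $uv\in F\subseteq R$; the copy lies inside $G$ and avoids $Z\supseteq W(uv)$ --- exactly the configuration ruled out in the preceding paragraph. This contradiction completes the proof. (The roles of $H_1$ and $H_2$ need not be swapped: $e(G)\ge Cn^{1+\gamma}$ dominates both $2D_1n^{1+\alpha}$ and $2^{r_1+3}D_2n^{1+\beta}$ because $\gamma\ge\alpha$ and $\gamma\ge\beta$.)

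The step I expect to be the true obstacle is the last one --- disjointness off the glued edge. Fixing the $H_1$-copy first and then searching for an $H_2$-copy rooted at $uv$ that avoids its $O(1)$ remaining vertices is not obviously feasible, since one such vertex could lie in every $H_2$-copy rooted at $uv$, and a Tur\'an bound gives no control over the number of $H_2$-copies through a prescribed edge; and randomly subsampling the whole host graph is the wrong move, because for $\delta<1$ it lowers the density $e(G)/n^{1+\delta}$. The trick that works is to delete a constant fraction $Z$ of the vertices while keeping only those root edges whose \emph{entire} blob was deleted into $Z$ --- a constant fraction of $R$ survives, and then any $H_2$-copy found outside $Z$ is automatically disjoint from the blob over its root edge. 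The constant-factor loss is affordable exactly because $\alpha$ and $\beta$ are each at most $\gamma$, so the surviving subgraph still carries far more than $\ex(n,H_2)$ edges.
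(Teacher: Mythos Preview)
Your proof is correct and takes a genuinely different, more elementary route than the paper's.

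The paper derives Theorem~\ref{maingluingalongedge} from Theorem~\ref{mixedfamilies}, which in turn rests on the cleaning Lemma~\ref{first-cleaning}: for each $H_i$ one builds a large family $\mathcal{F}_i$ of embeddings into $G$ with two degree controls---no edge of $G$ is the image of $f_i$ abnormally often, and for any fixed image of $f_i$ no outside vertex lies in abnormally many members of $\mathcal{F}_i$. These controls then allow one to pick, greedily, copies of $H_1,\dots,H_t$ through a common edge that are otherwise pairwise vertex-disjoint. The machinery actually delivers supersaturation---$\Omega(p^{\e(H_1)+\e(H_2)-1}n^{\vr(H_1)+\vr(H_2)-2})$ copies of some member of $\Hc$---and extends to gluing $t$ graphs along an edge, and several copies of a fixed graph along a subforest (Theorem~\ref{mainmain}). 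Note, though, that Lemma~\ref{first-cleaning} and Theorem~\ref{mixedfamilies} are stated under the stronger Erd\H{o}s--Simonovits goodness hypothesis rather than the bare Tur\'an bounds appearing in Theorem~\ref{maingluingalongedge}.

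Your argument sidesteps all of this. It uses nothing beyond the Tur\'an bounds: the ``half the edges are roots'' observation is immediate from them, and the random set $Z$ is what decouples the fixed $H_1$-blob from the search for $H_2$. Retaining only those root edges $uv\in R$ whose pre-committed blob $W(uv)$ landed entirely in $Z$---so that any $H_2$-copy found in $(V(G)\setminus Z,F)$ through such an edge is automatically disjoint from that blob---is the decisive trick, and it costs only the constant factor $2^{-r_1-2}$ in edges, which is affordable precisely because $\gamma\ge\beta$. What you trade away is the quantitative count (you produce one member of $\Hc$, not the random-like number) and the flexibility to glue along larger pieces or to mix more than two graphs. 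For the statement of Theorem~\ref{maingluingalongedge} as written, your argument is both shorter and uses only the stated hypotheses.
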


Similarly, Dong, Gao, and Liu~\cite{dong2025bipartite} considered the operation of gluing two graphs on a fixed vertex and proved the following. 

\begin{theorem}[Theorem 1.5 in \cite{dong2025bipartite}]
Let $H$ be a bipartite graph with bipartition $(A, B)$. Given two vertices $u, v \in A$ or $u,v \in B$, let $H_{u,v}$ be formed from two copies $H_1, H_2$ by identifying  $u$ with $v$.
Then, 
$$\ex(n, H_{u,v}) = \Theta(\ex(n, H)).$$
\end{theorem}

In this paper, we are interested in the supersaturation questions for bipartite graphs, that is, if we are above the Tur\'an number, how many copies of a graph $H$ can we find in a host graph $G$? In this direction, the famous conjecture of Sidorenko~\cite{sidorenko1991inequalities} states the following: 
\begin{conjecture}[\cite{sidorenko1991inequalities}]
Let $H$ be a bipartite graph and $G$ be any graph with $pn^2$ edges. Then the number of homomorphisms from $H$ to $G$ is at least $p^{\e(H)}n^{\vr(H)}$. 
\end{conjecture}

It is easy to see that the number of homomorphic copies in this conjecture is tight for the random binomial graph  $G(n,p)$ for most values of  $p$, more specifically, when the number of isomorphic copies of $H$ is of the same order as the number of homomorphic copies. Sidorenko's Conjecture has been studied greatly with it being verified for many different bipartite graphs, such as trees \cite{sidorenko1991inequalities}, bipartite graphs with one vertex in one part complete to the other part \cite{conlon2010approximate}, among other results~\cite{conlon2018some, conlon2018sidorenko, conlon2018sidorenko2, kim2016two, sidorenko1991inequalities, sidorenko1993correlation}. Among them, we highlight the following theorem of Li and Szegedy~\cite{li2011logarithimic}: 
\begin{theorem}[Theorem 2 in \cite{li2011logarithimic}]\label{liszegedy}
 Let $H_1$, $H_2$ be two graphs satisfying Sidorenko's Conjecture and $e_1 \in E(H)$ and $e_2 \in E(H)$
arbitrary edges. Then both graphs obtained from $H_1$ and $H_2$ by identifying $e_1$ and $e_2$ also satisfy Sidorenko's Conjecture.
\end{theorem}

This result says that the class of graphs for which we know Sidorenko's Conjecture for is closed under edge-gluing operation. Our goal in this paper is to extend this to the following closely related conjectures of Erd\H{o}s and Simonovits \cite{ES-cube}.

\begin{conjecture}[Conjecture $2$ in \cite{ES-cube}]\label{es-conj1}
For every bipartite graph $H$ there exists $\beta, C > 0$ such that the following holds. If $e(G) = pn^2 \geq C \ex(n, H)$, then the number of embeddings of $H$ into $G$ is $\beta p^{e(H)}n^{v(H)}$. 
\end{conjecture}

\begin{conjecture}[Conjecture $2^*$ in ~\cite{ES-cube}]\label{es-conj2}
For every bipartite graph $H$ there exists $\beta, C, \alpha > 0$ such that the following holds. If $e(G) = pn^2 \geq C n^{1 + \alpha}$, then the number of embeddings of $H$ into $G$ is $\beta p^{e(H)}n^{v(H)}$. 
\end{conjecture}

It is not hard to see that if a graph satisfies Sidorenko's Conjecture, then it  satisfies Conjecture~\ref{es-conj2} with $\alpha = 1 - \frac{1}{e(H)}$ because for $p\geq Cn^{- \frac{1}{e(H)}}$ most of the homomorphic copies of $H$ in $G$ are isomorphic copies. Conjecture~\ref{es-conj1} is stronger than Sidorenko's conjecture above the Tur\'an threshold of a graph $H$. Indeed, if a bipartite graph $H$ has Tur\'an number  of order exactly $\Theta(n^{1 + \alpha})$~\footnote{This is believed to be true for every bipartite graph, see for example \cite{erdHos1982compactness}.}, then Conjecture~\ref{es-conj1} states for $p\geq Cn^{\alpha - 1}$ the number of isomorphic copies of $H$ is of order $p^{\e(H)}n^{\vr(H)}$. 

Conjecture~\ref{es-conj1} is very hard, and only known for a few families of graphs due to the sparsity of known tight Tur\'an numbers. Thus, we aim to establish Conjecture~\ref{es-conj2} for a given $H$, for the best known $\alpha$ for which $ \ex(n, H) =O( n^{1 + \alpha})$. For such $\alpha$, Conjecture~\ref{es-conj2} has been established for the cube \cite{ES-cube}, cycles \cite{jiang2020supersaturation,FS}, theta graphs~\cite{corsten2021balanced}, complete bipartite graphs~\cite{ES-supersat}, and the families discussed in these works \cite{conlon2010approximate, grzesik2022turan, jiang2023tree}.  In the current paper, we study how edge or subgraph gluing affects  the satisfiability of  Conjecture~\ref{es-conj1}.
If $H$ satisfies Conjecture~\ref{es-conj2} for specific value of $\alpha$, we will call such a graph $\alpha$-supersaturated.  Our main result follows.

\begin{theorem}\label{maingluingalongedge} For $0\leq \alpha<1$, let $H_1$ and $H_2$ be two $\alpha$-supersaturated graphs. Fix $e_1 \in E(H_1)$ and $e_2 \in E(H_2)$. Let $\Hc$ be the family of graphs formed by identifying $e_1$ and $e_2$. Then there exist constants $C, \beta> 0$, such that for any graph $G$ with $pn^2\geq Cn^{1+\alpha}$ edges, the number of copies of some member of $\Hc$ in $G$ is at least $\beta p^{\e(H_1) + \e(H_2) - 1}n^{\vr(H_1) + \vr(H_2) - 2}$ for some $\beta$ depending only on $H_1, H_2$. 
\end{theorem}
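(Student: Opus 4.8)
The plan is to realise a copy of a member of $\Hc$ inside $G$ as a copy of $H_1$ together with a copy of $H_2$ glued along the edge that plays the role of $e_1$ (equivalently $e_2$); which of the (at most) two members of $\Hc$ one obtains is dictated by the relative orientation of the two copies at that edge. So I would first invoke $\alpha$-Erd\H{o}s--Simonovits goodness of $H_1$ and of $H_2$, with $C$ chosen larger than both of their thresholds, to get at least $\beta_1 p^{\e(H_1)}n^{v(H_1)}$ copies of $H_1$ and at least $\beta_2 p^{\e(H_2)}n^{v(H_2)}$ copies of $H_2$ in $G$. For an edge $f$ of $G$ write $M_i(f)$ for the number of copies of $H_i$ that send $e_i$ onto $f$; then $\sum_f M_i(f)$ is, up to the bounded automorphism factor, the total number of copies of $H_i$, and the number of glued configurations (copies of members of $\Hc$, allowing for the moment the two copies to overlap beyond the shared edge), summed over the two orientations, is $\Theta\!\bigl(\sum_f M_1(f)M_2(f)\bigr)$. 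Since the target $\beta p^{\e(H_1)+\e(H_2)-1}n^{v(H_1)+v(H_2)-2}$ is, up to constants, exactly $\bigl(\sum_f M_1(f)\bigr)\bigl(\sum_f M_2(f)\bigr)/e(G)$, everything reduces to: (a) $\sum_f M_1(f)M_2(f)=\Omega\!\bigl((\sum_f M_1(f))(\sum_f M_2(f))/e(G)\bigr)$, and (b) the truly degenerate glued configurations — those in which the images of $H_1$ and $H_2$ share a vertex outside the glued edge — are only a small fraction of the total.

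For (a) the obvious danger is that $M_1$ and $M_2$ are ``anti-correlated'': if all copies of $H_1$ anchor $e_1$ on a small edge set disjoint from where $H_2$ anchors $e_2$, then $\sum_f M_1(f)M_2(f)$ is negligible. To rule this out I would pass to a \emph{regularised} host graph: delete edges of $G$ that lie in too few copies of $H_1$ anchored there, until every surviving edge lies in $\Omega\bigl((p')^{\e(H_1)-1}n^{v(H_1)-2}\bigr)$ such copies, where $p'$ is the density of the surviving graph $G'$. Crucially one need only regularise with respect to $H_1$: having done so, apply $\alpha$-Erd\H{o}s--Simonovits goodness of $H_2$ to $G'$ to get $\Omega((p')^{\e(H_2)}n^{v(H_2)})$ copies of $H_2$ in $G'$, and then
$$\sum_{f\in E(G')}M_1^{G'}(f)M_2^{G'}(f)\ \ge\ \Bigl(\min_{f}M_1^{G'}(f)\Bigr)\sum_{f}M_2^{G'}(f)\ =\ \Omega\bigl((p')^{\e(H_1)+\e(H_2)-1}n^{v(H_1)+v(H_2)-2}\bigr),$$
which is of the required order provided the deletion keeps the density of $G'$ comparable to $p$ — or, if it does not, provided that what survives is a genuinely dense piece, on which the conclusion is immediate from ordinary supersaturation for complete bipartite graphs.

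I expect this regularisation step to be the main obstacle. Deleting an edge that anchors few copies of $H_1$ also destroys copies of $H_1$ anchored at \emph{other} edges that merely pass through the deleted one, so one cannot simply charge the lost copies to the anchor and conclude that few edges are removed. The way I would control this is to use $\alpha$-Erd\H{o}s--Simonovits goodness of $H_1$ again, now on intermediate subgraphs: as long as the current graph still has density at least the goodness threshold times $n^{1+\alpha}$ it still contains the full expected number of copies of $H_1$; and — using that goodness forces $\ex(n,H_1)=O(n^{1+\alpha})$, since an $H_1$-free graph of larger density would already contain a copy — one argues that $G$ cannot have almost all of its edges in an $H_1$-poor part. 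When instead the copies of $H_1$ are concentrated on a small edge set, that set must span a near-complete subgraph, on which the glued graph occurs far more often than required directly; formally I would organise this as a ``clean or recurse'' alternative, observing that one cannot recurse onto ever denser subgraphs indefinitely since density is bounded.

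Finally, for (b): every degenerate glued configuration is a homomorphic image of a member of $\Hc$ in which some further pair of vertices has been identified, hence is supported on at most $v(H_1)+v(H_2)-3$ vertices. Bounding the number of such configurations — for instance by fixing the glued edge and one additional coincident vertex and then counting extensions of the remaining structure of $H_2$ — and comparing with the main term, one sees they form a vanishing (or at worst small-constant) fraction once $C$ is a sufficiently large constant depending on $H_1$ and $H_2$; this is the same homomorphism-versus-embedding bookkeeping already implicit in the definition of Erd\H{o}s--Simonovits goodness. Subtracting these, $G$ contains $\Omega\bigl(p^{\e(H_1)+\e(H_2)-1}n^{v(H_1)+v(H_2)-2}\bigr)$ copies of some member of $\Hc$, as claimed.
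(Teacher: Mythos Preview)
Your overall strategy---counting pairs of an $H_1$-copy and an $H_2$-copy that share the anchored edge, then subtracting degenerate pairs---is the same shape as the paper's, but both of your steps (a) and (b) have real gaps that the paper fills with a single technical tool you are missing.

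The more serious gap is in (b). Your claim that degenerate pairs form a vanishing or small-constant fraction is not justified and is not true without further control. For a fixed anchor edge $f$ and a vertex $u\notin f$, the number $M_2(f,u)$ of $H_2$-copies anchored at $f$ that pass through $u$ can be as large as $M_2(f)$ itself: nothing prevents every $H_2$-copy anchored at $f$ from routing through one popular vertex. In that case $\sum_u M_1(f,u)M_2(f,u)$ is of the same order as $M_1(f)M_2(f)$ and the subtraction leaves nothing. Your appeal to ``the same homomorphism-versus-embedding bookkeeping already implicit in the definition'' does not apply: that bookkeeping, for a single $H$, uses that non-injective maps land on $\le v(H)-1$ vertices and so there are crudely at most $n^{v(H)-1}$ of them; here both maps are injective and you are constraining a \emph{specific} vertex to be hit, which a popular vertex can satisfy for essentially all copies. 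Nor does raising $C$ help---it does not give you a per-vertex upper bound.

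Step (a) has the difficulty you yourself flag, and your ``clean or recurse'' sketch does not resolve it. To conclude that only few edges are low-degree you need an \emph{upper} bound on $M_1(f)$ for every $f$; Erd\H{o}s--Simonovits goodness gives only a lower bound on the total $\sum_f M_1(f)$.

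The paper handles both issues with one lemma (its Lemma~\ref{first-cleaning}). It builds, for each $H_i$, a family $\mathcal{F}_i$ of embeddings one at a time, maintaining: (iii) no edge $f$ anchors more than $(1+\varepsilon)\,|\mathcal{F}_i|/e(G)$ members, and (ii) for every edge $f$ and vertex $u\notin f$, at most a $\gamma$-fraction of the expected count of members anchored at $f$ contain $u$. Property (iii) forces, by averaging, that at most an $\varepsilon$-fraction of edges are below half the average degree---this is your step (a) done cleanly. Property (ii) is exactly the missing control in your step (b): the $v(H_1)-2$ non-anchor vertices of a chosen $H_1$-copy together block only a $(v(H_1)-2)\gamma$-fraction of the $H_2$-copies at that anchor, so for small $\gamma$ the greedy extension succeeds. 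The lemma itself is proved by a random-patch argument (sample a $qn$-subset, excise edges from saturated $(f,u)$ configurations, apply goodness inside the patch), which is where the real work lies.
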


This implies that when $\ex(n, H_1) = \Theta(n^{1+ \alpha})$ and $\ex(n, H_2) = O(n^{1+ \alpha})$, and $\Hc = \{H\}$, $H$ must satisfy  Conjecture~\ref{es-conj1}. In particular, we are able to give the best known supersaturation bounds for cycles with a chord. Recall that  $C_{2k}$ is $\frac{1}{k}$-supersaturated~\cite{jiang2020supersaturation, FS}.  Let $H$ be formed by gluing a $C_{2k}$ and a $C_{2 \ell}$ with $\ell \geq k$, then this implies $H$ is $\frac{1}{k}$-supersaturated. This is tight, for example, for $k=2,3,5$ due to having tight bounds for corresponding extremal numbers of $C_4, C_6$ and $C_{10}$~\cite{brown1966graphs, lazebnik1994properties, lazebnik1999polarities}.

We can also obtain optimal results for the family of \emph{cycle blow-ups of trees}. Let $T$ be a tree on $[n]$ and $\mathcal{C} = \{C_1, \dots C_n\}$ be a family of vertex-disjoint even cycles. For every edge $ij \in E(T)$, we mark a pair of vertices $v_{ij}^i \in C_i$ and $v_{ij}^j \in C_j$. Let $T[\mathcal{C}]$ be the graph formed from the graphs $C_i \in \mathcal{C}$ by connecting the vertices $v_{ij}^i$ and $v_{ij}^j$ for every $ij \in E[T]$ by vertex-disjoint paths $P_{ij}$ of arbitrary length. In case the length of the path $P_{ij}$ is zero, we simply glue the two vertices  $v_{ij}^i$ and $v_{ij}^j$. Theorem~\ref{maingluingalongedge} implies the following:

\begin{theorem}\label{cycle-blowups}
Let $\ell$ be any positive integer. Given a tree $T$ on $[n]$ and $\mathcal{C} = \{C_1, \dots C_n\}$ be a family of vertex-disjoint even cycles with $|C_i| \geq 2 \ell$ for all $i\in [n]$, the graph $T[\mathcal{C}]$ is $\frac{1}{\ell}$-supersaturated. 
\end{theorem}
This result is in the spirit of the results  of Grzesik, Janzer, and Nagy for $(r, t)$-blowups of trees \cite{grzesik2022turan} and Jiang and the second author for tree-degenerate graphs \cite{jiang2023tree}.  Note that if we believe the conjecture of Bondy and Simonovits \cite{BondySim74} that $\ex(n,C_{2k})=\Theta(n^{1+1/k})$, then this family $T[\mathcal{C}]$ satisfies Conjecture~\ref{es-conj1}. 

In a slightly different direction, Szegedy~\cite{szegedy2014information} roughly speaking showed that if a graph $H$ can be formed by gluing graphs that satisfy Sidorenko's Conjecture along forests, then $H$ satisfies Sidorenko's Conjecture. In a  similar fashion, we show that if $H$ can be formed by an $\alpha$-supersaturated graph $F$ by gluing many copies of $F$ along a fixed tree, it will be $\alpha'$-supersaturated for some $\alpha'$.

\begin{theorem}\label{mainmain}
Let $H$ be an $\alpha$-saturated graph with $h$ vertices, and let $F$ be a proper forest contained in $H$ with $\ell$ vertices. Let $H^*$ be formed from $H$ by taking $s$ copies of $H$ and gluing them along $F$. Let $\alpha' = \max\{ 1 - \frac{h - \ell}{\e(H) - \e(F)}, 1 - \frac{1 - \alpha}{\ell - 1 + (1 - \e(F))(1 - \alpha)}\}$. 
Then, there exists a constant $C$ such that
$$ \ex(n, H^*) \leq Cn^{1 + \alpha'} $$ and moreover, $ H^*$ is $\alpha'$-supersaturated.
\end{theorem}

 Given a graph $H$ and a set $S$ of vertices, we let $e(S)$ denote the number of edges of $H$ with at least one endpoint in $S$ and let $\rho(S)=\frac{e(S)}{|S|}$. Given a tree $T$, let $R$ be the set of all leaves and define $\rho(T)=\rho(V(T)\setminus R)$.
 We call a tree $T$ {\it $\rho$-balanced} if for every $S\subseteq V(T)\setminus R$, we have $\rho(S)\geq \rho(T)$. For instance, any path of length at least two is $\rho$-balanced as is any star with at least two leaves. Fix a $\rho$-balanced tree $T$. For any positive integer $q$, let ${\mathcal T}^q_R$ denote the family of graphs obtainable by taking the union of $q$ distinct copies of $T$ each of which agree on $R$. A simple deletion method using Erd\H{o}s-Renyi random graphs shows that for any $\varepsilon>0$, there exists $q_0$ such that for all $q\geq q_0$, $\ex(n,{\mathcal T}^q_R)= \Omega(n^{2-1/\rho(T)-\varepsilon})$. Bukh and Conlon \cite{BC} used a probability space based on random polynomials over finite fields to show that in fact, there exists $q_0$ such that for all $q\geq q_0$, $\ex(n,{\mathcal T}^q_R)= \Omega(n^{2-1/\rho(T)})$.  On the other hand, a simple counting argument shows that $\ex(n,{\mathcal T}^q_R)=O(n^{2-1/\rho(T)})$.  If we restrict our attention to the graph $T_R^q$, the single graph which is formed by taking a union of $q$ distinct copies of $T$ which agree only on $R$, the lower bound still holds, but the upper bound becomes a difficult question to which much work has been given \cite{Conlon_Janzer_2022, Conlon_Janzer_Lee_2020, Conlon_Lee_2021, Janzer_2018, Janzer_2020, Jiang_Jiang_Ma_2022, Jiang_Ma_Yepremyan_2022, Jiang_Qiu_2023, Kang_Kim_Liu_2018}. It was conjectured by Bukh and Conlon~\cite{BC} that $\ex(n,T^q_R)=O(n^{2-1/\rho(T)})$ must hold. We observe that this graph $T_R^q$ satisfies Sidorenko's Conjecture for all $q$, and thus is $(1 - \frac{1}{e(T_{R}^q)})$-supersaturated. For theta graphs~\cite{corsten2021balanced}, complete bipartite graphs~\cite{ES-supersat} and their $2$-subdivisions~\cite{Qiu}, a better $\alpha$ is known. As a corollary to Theorem~\ref{mainmain}, we show that for all trees and all $q$, $\alpha$ can be taken to be $(1 - \frac{1}{|R|})$. 

 \begin{corollary}\label{maintree}
Let $T$ be a tree on $t \geq 3$ vertices with $r$ leaves,  and let $T^q$ be formed by gluing $q$ copies of $T$ along all the leaves.  Then $T^q$ is $(1 - \frac{1}{r})$-supersaturated. 
\end{corollary}
This result is tight for large $q$ and $T = K_{1, r}$, but is likely far from the truth in most cases. We believe that one should be able to take $\alpha=1 - \frac{1}{\rho(T)}$ when $T$ is $\rho$-balanced. If true, this would be a strengthening of the Bukh-Conlon conjecture mentioned above.

\section{Notation and Preliminary results}

For a graph $G$, let $\delta(G)$ and $\Delta(G)$ denote its minimum and maximum degrees.  For $K\geq 0$ real number, we say a graph $G$ is $K$-almost-regular if $\Delta(G) \leq K\delta(G)$.

     \begin{definition} For some real numbers $ \alpha, \beta, C > 0$, a graph $H$ is $(\alpha, C, \beta)$-supersaturated if for every graph with $pn^2$ edges, satisfying $p \geq Cn^{\alpha - 1}$, $G$ has at least $\beta p^{\e(H)}n^{\vr(H)}$ many copies of $H$.
     \end{definition}

This section states some auxiliary lemmas, including a standard reduction to $K$-almost-regular graphs, and a novel \textquotedblleft cleaning" lemma which is the key tool of this paper. Roughly speaking, it is in the spirit of ``balanced supersaturation'' and says we can find many copies of a given graph $H$ in a way that no fixed edge is in many copies, and furthermore, if we condition copies containing a specific subgraph $F$ of $H$ then no vertex $v$ outside of $F$ is involved in many such copies.

\begin{theorem}[Jensen's Inequality~\cite{jensen1906}]
\label{Jensen's Inequality}
Let $\varphi$ be a real convex function. Let $x_1, \dots , x_n$ be numbers from its domain, and $a_1, \dots ,a_n$ positive real numbers. Then $$\varphi \left( \frac{\sum_{1\leq i \leq n} a_ix_i}{\sum_{1\leq i \leq n}a_i} \right)\leq \frac{\sum_{1\leq i \leq n}a_i\varphi(x_i)}{\sum_{1\leq i \leq n} a_i}.$$

If $\varphi$ is a concave function, then the inequality is reversed.
\end{theorem}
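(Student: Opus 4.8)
The plan is to prove the convex case by induction on $n$, and then deduce the concave case immediately by applying the convex case to $-\varphi$ (which is convex, and negating reverses the inequality).

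First I would normalize the weights. Set $A = \sum_{1\le i \le n} a_i > 0$ and $w_i = a_i/A$, so $w_i > 0$ and $\sum_i w_i = 1$, and the asserted inequality becomes $\varphi\bigl(\sum_i w_i x_i\bigr) \le \sum_i w_i \varphi(x_i)$. At this point I would observe that $\sum_i w_i x_i$ is a convex combination of points of the domain of $\varphi$, and since the domain of a convex function is an interval, this point again lies in the domain, so the left-hand side is well defined.

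For the induction itself: the case $n=1$ is an equality. For $n \ge 2$, if $w_n = 1$ there is nothing to prove, so assume $w_n < 1$ and set $y = \sum_{i<n} \frac{w_i}{1-w_n} x_i$, a convex combination of $x_1,\dots,x_{n-1}$. Then $\sum_{i \le n} w_i x_i = (1-w_n)\, y + w_n x_n$, so the two-point defining inequality of convexity gives $\varphi\bigl(\sum_{i\le n} w_i x_i\bigr) \le (1-w_n)\varphi(y) + w_n \varphi(x_n)$; bounding $\varphi(y)$ by the inductive hypothesis and collecting terms yields $\varphi\bigl(\sum_{i\le n} w_i x_i\bigr) \le \sum_{i \le n} w_i \varphi(x_i)$, completing the induction.

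I would also mention the alternative supporting-line proof, which avoids induction: writing $m = \sum_i w_i x_i$ and choosing an affine $\ell$ with $\ell \le \varphi$ and $\ell(m) = \varphi(m)$ (such $\ell$ exists since a convex function has a support line at every interior point of its domain, obtained from its one-sided derivatives), one gets $\sum_i w_i \varphi(x_i) \ge \sum_i w_i \ell(x_i) = \varphi(m)$ because $\ell$ is affine and $\sum_i w_i (x_i - m) = 0$. The only mild technical points anywhere are the bookkeeping around the domain and, in the supporting-line version, the existence of the support line; the induction avoids the latter, so I would present the induction as the main argument and relegate the supporting-line argument to a remark.
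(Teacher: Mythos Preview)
Your proof is correct and entirely standard: the normalization to probability weights, the induction on $n$ using the two-point definition of convexity, and the deduction of the concave case via $-\varphi$ are all fine, and your remark about the supporting-line alternative is accurate (with the usual caveat you mention about interior points). There is nothing to compare against, however: the paper does not prove Jensen's inequality at all, but simply states it as a classical result with a citation to Jensen's 1906 paper and uses it later as a black box.
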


The following lemma is Theorem 3.3 from Jiang and the third author~\cite{jiang2020supersaturation}, which allows us to reduce a saturation problem in the general setting to one with $K$-almost-regular host graphs.

\begin{theorem}[Theorem 3.3 in~\cite{jiang2020supersaturation}] \label{reduction} 
Let $H$ be a graph containing a cycle. Then, there exist constants $K,C'>0$ such that if every $K$-almost-regular graph $G$ on $n$ vertices with $\e(G) \geq C  n^{ 1 + \alpha}$ edges contains at least $\beta \e(G)^{\e(H)}n^{\vr(H) - 2 \e(H)}$ copies of $H$, then $H$ is $(\alpha, \max\{4C, C'\}, \frac{\beta}{4^{\e(H)}})$-supersaturated. 
\end{theorem}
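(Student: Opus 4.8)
\medskip\noindent
The plan is to deduce the statement for an arbitrary host from the hypothesis about $K$-almost-regular hosts via the Erd\H{o}s--Simonovits regularization lemma, the counting being driven by the fact that a graph with a cycle has average degree strictly larger than $1$. First, two harmless reductions. We may assume $H$ has no isolated vertices: if $H$ has $j$ of them and $H^{-}$ denotes $H$ with them deleted, then every copy of $H^{-}$ in an $n$-vertex graph extends to $\Theta(n^{j})$ copies of $H$, while $\e(H^{-})=\e(H)$ and $v(H^{-})=v(H)-j$, so a lower bound of the stated shape for $H^{-}$ yields one for $H$ after rescaling $\eta$ by a constant. Then $H$ has no isolated vertex and contains a cycle, so $2\e(H)=\sum_{x\in V(H)}\deg_{H}(x)>v(H)$; this strict inequality --- it says that $H$ is edge-heavy, so that a smaller but correspondingly denser host is at least as good for counting $H$ --- is the one genuinely structural input of the proof, and it needs no connectivity assumption on $H$. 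We may also take $n$ as large as we like, since $\e(G)\le\binom{n}{2}$ and $\e(G)\ge Cn^{1+\alpha}$ already force $n\ge(2C)^{1/(1-\alpha)}$.

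Now let $\e(G)=pn^{2}\ge\max\{4A,C\}n^{1+\alpha}$, with $C=C(\alpha)$ a large constant fixed below. Apply the Erd\H{o}s--Simonovits almost-regular-subgraph lemma (which begins with the standard minimum-degree cleanup, costing a factor $2$ in edges): for the constant $K=K(\alpha)$ it furnishes, which we take to be the $K$ in the statement, one obtains a $K$-almost-regular subgraph $G'\subseteq G$ on $m$ vertices, with $m$ bounded below by a fixed positive power of $n$ and --- and this is the quantitatively delicate point --- with $\e(G')$ controlled from below in terms of $\e(G)$, $n$, $m$ sharply enough that the pair $(m,\e(G'))$ is no worse than $(n,\e(G))$ for the purpose of counting copies of $H$. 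Fix $C$ large enough that $\e(G')\ge A\,m^{1+\alpha}$ and that $m$ exceeds the threshold in the hypothesis; applying the hypothesis to $G'$ then gives at least $\eta\,\e(G')^{\e(H)}m^{v(H)-2\e(H)}$ copies of $H$, all lying inside $G'\subseteq G$. It remains to check that
\[
\eta\,\e(G')^{\e(H)}\,m^{v(H)-2\e(H)}\;\ge\;\frac{\eta}{4^{\e(H)}}\,p^{\e(H)}\,n^{v(H)}.
\]
Substituting the lower bound on $\e(G')$ and cancelling the common factor $\eta\,p^{\e(H)}$, this reduces --- here is where $2\e(H)>v(H)$ enters, fixing the sign of the relevant power of $m/n$ --- to an inequality that is immediate from $m\le n$. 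All remaining multiplicative constants (from the cleanup, the regularization, and rounding) are absorbed into the factors $4^{\e(H)}$ and $4A$, which is exactly the origin of the $\eta/4^{\e(H)}$ and the threshold $\max\{4A,C\}$ in the statement.

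The step I expect to be the main obstacle is the regularization: one must invoke the Erd\H{o}s--Simonovits lemma in a form sharp enough to support the inequality above --- the crude conclusion $\e(G')\ge\tfrac{c}{2}m^{1+\alpha}$ by itself is too weak, as is visible when nearly all of $G$'s edges sit inside a small, very dense piece (then $m$ must be chosen small, and one needs the matching fact that $\e(G')$ is then correspondingly large) --- and then carefully chase the exponents through that extreme regime, which is precisely where $2\e(H)>v(H)$ is used. Everything else is bookkeeping of constants, together with the two reductions at the start; this is in essence the argument of \cite{jiang2020supersaturation}.
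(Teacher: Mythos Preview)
The paper does not supply its own proof of this theorem; it is quoted verbatim as Theorem~3.3 of Jiang and Yepremyan~\cite{jiang2020supersaturation} and used as a black box. Your sketch --- reduce to no isolated vertices, note $2\e(H)>\vr(H)$ since $H$ contains a cycle, pass to a $K$-almost-regular subgraph $G'$ on $m\le n$ vertices via the Erd\H{o}s--Simonovits regularization lemma, apply the hypothesis inside $G'$, and use $2\e(H)>\vr(H)$ to compare the resulting count against the target --- is exactly the argument of~\cite{jiang2020supersaturation}, as you yourself note at the end, so there is nothing further to compare.
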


We will also use this standard approximation lemma.  
\begin{lemma}\label{bounds}
For any $s$ and $q \leq 1$ such that $qn \geq 2 ( s - 1)$
$$q \left(\frac{q}{2}\right)^{s - 1} \leq \frac{\binom{n - s}{qn - s}}{\binom{n}{qn}} \leq q^s.$$  
\end{lemma}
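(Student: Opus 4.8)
The plan is to rewrite the ratio of binomial coefficients as an explicit product of $s$ linear factors and then bound each factor separately. First I would cancel the common factor $(n-qn)!$ in the two binomials to get
$$\frac{\binom{n-s}{qn-s}}{\binom{n}{qn}} = \frac{(n-s)!\,(qn)!}{(qn-s)!\,n!} = \prod_{i=0}^{s-1}\frac{qn-i}{n-i},$$
treating $qn$ and $qn-s$ as integers as is implicit in the statement. Everything then reduces to estimating this product of $s$ terms.

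For the upper bound, since $q \le 1$ we have $qn - i \le q(n-i)$ for every $i \ge 0$ (equivalently $qi \le i$), so each of the $s$ factors is at most $q$, which immediately gives $\prod_{i=0}^{s-1}\frac{qn-i}{n-i} \le q^s$. For the lower bound, I would separate out the $i=0$ term, which equals exactly $q$, and handle the remaining factors for $1 \le i \le s-1$. For those I claim $\frac{qn-i}{n-i} \ge \frac{q}{2}$: clearing denominators this is equivalent to $qn \ge i(2-q)$, and since $2-q \le 2$ and $i \le s-1$, the hypothesis $qn \ge 2(s-1)$ is exactly enough. Multiplying the single factor $q$ by the $s-1$ factors each at least $q/2$ yields the claimed lower bound $q\,(q/2)^{s-1}$. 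I would also note in passing that $qn \ge 2(s-1)$ forces $s-1 \le qn/2$, so $qn - i > 0$ for all relevant $i$ and the whole chain of inequalities is between positive quantities.

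I do not expect any genuine obstacle here; this is a routine approximation lemma. The only places that warrant a line of care are the factorial bookkeeping in the first display and checking that the quantitative hypothesis $qn \ge 2(s-1)$ is precisely what is needed to keep every factor bounded below by $q/2$ (and positive); the small cases such as $s = 1$ can be verified directly if one wishes to be scrupulous about degeneracies.
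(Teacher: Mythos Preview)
Your proof is correct. The paper states this as a ``standard approximation lemma'' and gives no proof at all, so there is nothing to compare against; your argument --- writing the ratio as $\prod_{i=0}^{s-1}\frac{qn-i}{n-i}$ and bounding each factor --- is exactly the standard one and all the inequalities check out.
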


The following is a common probabilistic lower bound that would be useful in both the proof and applications of Lemma~\ref{first-cleaning}. A result of Bohman and Keevash shows the bound below is tight if and only if $H$ and $F$ are trees \cite{bohman2010early}\footnote{Notice that if the result is tight then we may assume $F$ is strictly $2$-balanced.}.

\begin{theorem}\label{problowerbound}
For all bipartite graph $H$ and $F$ a subgraph of $H$ with at least two edges, 
$$\ex(n, H) = \Omega (n^{2 - \frac{\vr(F) - 2}{\e(F) - 1}}).$$
In particular, for all subgraphs $F \subset H$ with at least one edge, if there exists an $C$ such that $\ex(n, H) \leq Cn ^{ 1 + \alpha}$,  then  $$(\vr(F) - 1) + (1 - \e(F))(1 - \alpha) \geq 1.$$
\end{theorem}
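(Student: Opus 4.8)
The first statement is the classical probabilistic deletion bound, so I would just recall the argument. Take $G = G(n,p)$ with $p = c\, n^{-\frac{\vr(F)-2}{\e(F)-1}}$ for a small constant $c>0$. The expected number of edges is $\Theta(p n^2)$, and the expected number of copies of $F$ is $\Theta(p^{\e(F)} n^{\vr(F)}) = \Theta(c^{\e(F)-1}) \cdot \Theta(p n^2)$; choosing $c$ small makes this at most half the expected edge count. Deleting one edge from each copy of $F$ destroys every copy of $H$ (since $F \subseteq H$ with $\e(F)\ge 2$, actually any subgraph with at least one edge suffices for the "in particular" part, but for the displayed bound we want $F$ to be the densest relevant subgraph — really one takes the supremum over subgraphs $F$ with $\e(F)\ge 2$ of the exponent, though for the statement as written we just fix one such $F$). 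The resulting graph is $H$-free and, after also deleting isolated vertices, still has $\Omega(p n^2) = \Omega(n^{2 - \frac{\vr(F)-2}{\e(F)-1}})$ edges with positive probability, giving $\ex(n,H) = \Omega(n^{2 - \frac{\vr(F)-2}{\e(F)-1}})$.

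For the "in particular" clause, suppose $\ex(n,H) \le A n^{1+\alpha}$. If $F\subseteq H$ has $\e(F) = 1$, the claimed inequality $(\vr(F)-1) + (1-\e(F))(1-\alpha) \ge 1$ reads $\vr(F) - 1 \ge 1$, i.e. $\vr(F)\ge 2$, which is automatic for an edge; so the content is in the case $\e(F)\ge 2$. There, combining $\ex(n,H) = \Omega(n^{2 - \frac{\vr(F)-2}{\e(F)-1}})$ with $\ex(n,H) = O(n^{1+\alpha})$ forces $2 - \frac{\vr(F)-2}{\e(F)-1} \le 1 + \alpha$, i.e. $\frac{\vr(F)-2}{\e(F)-1} \ge 1-\alpha$, i.e. $\vr(F) - 2 \ge (\e(F)-1)(1-\alpha)$. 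Rearranging, $(\vr(F)-1) + (1-\e(F))(1-\alpha) \ge 1$, which is exactly the desired inequality.

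There is no serious obstacle here: the first part is the textbook random-deletion lower bound for Turán numbers of bipartite graphs, and the second part is pure algebraic rearrangement of the first applied against the hypothesized upper bound. The only point requiring a little care is making sure the chosen subgraph $F$ (with $\e(F)\ge 2$) is the one driving the exponent and that the constant $c$ in $p$ is tuned so that edges outnumber copies of $F$ in expectation; both are routine. A secondary subtlety is handling the degenerate $\e(F)=1$ case of the "in particular" statement separately, as above, since the probabilistic bound there is vacuous but the claimed inequality still holds trivially.
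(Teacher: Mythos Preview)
Your proposal is correct and follows essentially the same approach as the paper: the standard probabilistic deletion argument in $G(n,p)$ with $p$ a constant times $n^{-(\vr(F)-2)/(\e(F)-1)}$, followed by the same algebraic rearrangement of the resulting exponent inequality. Your explicit treatment of the degenerate case $\e(F)=1$ in the ``in particular'' clause is a small improvement over the paper, which leaves that case implicit.
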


\begin{proof}
Let $p = \frac{1}{4} n^{ - \frac{\vr(F) - 2}{\e(F) - 1}}$ and $G = G(n,p)$ be a random graph such that each edge is kept with probability $p$. Let $X$ be the random variable counting the number of labeled copies of $F$ in $G$. Then $\EE[X] \leq n^{\vr(F)}p^{\e(F)}$. Meanwhile, linearity of expectation gives $\EE[\e(G)] = p\binom{n}{2}$. Let $G'$ be the subgraph of $G$ obtained by removing one edge from every labeled copy of $F$. Then \begin{align*}
\EE[\e(G')] &\geq p\binom{n}{2} - n^{\vr(F)}p^{\e(F)}\\
&\geq p \binom{n}{2} - n^{\vr(F)}p n^{\vr(F) - 2}\\
&\geq p \binom{n}{2} -\frac{1}{4} n^{2} p\\
&\geq \frac{1}{2}p\binom{n}{2}\\
&= \Omega( n^{2 - \frac{\vr(F) - 2}{\e(F) - 1}}). 
\end{align*}

Hence there exists an $F$-free (and thus $H$) graph with $\Omega(n^{2-\frac{\vr(F)-2}{\e(F)-1}})$ edges, and the lower bound follows from the fact that $F$-free graphs are $H$-free. Furthermore, if $\ex(n,H) \leq Cn^{1+\alpha}$, then we must have $$2-\frac{\vr(F)-2}{\e(F)-1} \leq 1+\alpha$$ and the desired inequality follows.
\end{proof}

Finally, we introduce our main cleaning result, an adaptation of similar results from \cite{FMS} and \cite{JL2}, however in difference to these results we control the number of \emph{vertices} involved, instead of the \emph{edges}. Note that $\ve$ may not be less than one, although we will apply it with $\ve = 1$ later on.

\begin{lemma}\label{first-cleaning}
Let $H$ be a graph on $h$ vertices which is $(\alpha, C, \beta)$-supersaturated, and $F$ be a subgraph of $H$ on $\ell$ vertices such that $0<\ell< h$ and $0<\e(F)<\e(H)$. Let $f \in E(H)$ be a fixed edge.

Then there exist constants $C', \beta', n_0 >0$ depending only on $H$ and $F$, such that for any graph $G$ on $n \geq n_0$ vertices and for any positive real numbers $\ve, \gamma, p$  satisfying $\gamma<1$ and
\begin{align*} 
p  &\geq g(n, H, F, \ve, \gamma) := C' \max(n^{-\frac{h-\ell}{\e(H)-\e(F)}}\gamma^{-\frac{1}{\e(H) - \e(F)}}, (n\gamma)^{\frac{\alpha -1}{(\ell-1)+(1-\alpha)(1-\e(F))}})
\end{align*} the following holds. If $\e(G) = (1 + \frac{2}{\ve}) p n^2$, then there is a family $\mathcal{H}$ of embeddings of $H$ in $G$ such that \begin{enumerate}
\item $|\Hc| \geq \beta' p^{\e(H)} n^{h}.$
\item For all embeddings $\psi: F \rightarrow G$ and all vertices $u$ not in $\psi(F)$, there are at most $ \gamma p^{\e(H) -\e(F)}n^{h - \ell}$ embeddings $\varphi \in \Hc$ such that $\varphi|_F = \psi$ and $u \in \varphi(H)$.
\item For any edge $e  \in E(G)$, there are at most $(1 + \ve) \frac{\beta' p^{e(H) } n^h}{e(G)}$ members $\varphi$ in $\Hc$ such that $\varphi(f) = e$. 
\end{enumerate}
\end{lemma}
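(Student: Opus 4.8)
The plan is to start from the raw supersaturation guarantee for $H$ and then iteratively delete the ``bad'' embeddings until properties (2) and (3) hold, while keeping enough embeddings for (1). First I would apply the $\alpha$-Erd\H{o}s--Simonovits goodness of $H$: since $\e(G) = (1+\tfrac{2}{\ve})pn^2 \ge pn^2 \ge Cn^{1+\alpha} \ge An^{1+\alpha}$ (after absorbing constants into $C$), we get a family $\Hc_0$ of at least $\eta (1+\tfrac{2}{\ve})^{\e(H)}p^{\e(H)}n^h \ge \eta p^{\e(H)}n^h$ embeddings of $H$. I would set $\eta'$ to be a small constant fraction of $\eta$ and try to show that after the cleaning at most, say, half of $\Hc_0$ is removed.

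For property (2): call a triple $(\psi, u)$, with $\psi\colon F\to G$ an embedding and $u\notin\psi(F)$, \emph{heavy} if more than $\gamma p^{\e(H)-\e(F)}n^{h-\ell}$ embeddings $\varphi\in\Hc_0$ restrict to $\psi$ on $F$ and contain $u$. The key counting step is: for each fixed $\psi$, the number of $\varphi\in\Hc_0$ extending $\psi$ is at most the total number of embeddings of $H$ extending $\psi$, which is at most $n^{h-\ell}$; more importantly, I would bound the \emph{total} number of (embedding $\varphi$, vertex $u$) incidences summed over all heavy $(\psi,u)$ by a double-counting argument. The number of embeddings of $F$ into $G$ is at most $n^\ell$, but to get a bound in terms of $p$ one should instead bound the number of embeddings $\varphi$ of $H$ that extend \emph{some} $\psi$ with a heavy $(\psi,u)$; each such $\varphi$ is counted, and the heavy condition forces a lower bound on how many $\varphi$ share a given $(\psi,u)$. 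Summing the heavy threshold over all possible $(\psi, u)$ — there are at most $n^\ell \cdot n = n^{\ell+1}$ of them, but the relevant ones have $\psi$ extendable to $H$ in $G$, of which there are at most (number of copies of $H$)$\times$(something) — lets one conclude the number of $\varphi$ involved in heavy triples is at most a fraction of $|\Hc_0|$ provided $p \ge g(n,H,F,\ve,\gamma)$; this is exactly where the first term $n^{-\frac{h-\ell}{\e(H)-\e(F)}}\gamma^{-1/(\e(H)-\e(F))}$ and the second term (via Theorem~\ref{problowerbound}, which gives $(\ell-1)+(1-\e(F))(1-\alpha)\ge 1$ and controls the number of copies of $F$ extendable within an $H$-dense graph) enter. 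Delete all $\varphi$ lying in a heavy triple; call the remainder $\Hc_1$.

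For property (3): this is the classical balanced-supersaturation edge-deletion step. Call an edge $e\in E(G)$ \emph{overloaded} if more than $(1+\ve)\frac{\eta' p^{\e(H)}n^h}{\e(G)}$ members of $\Hc_1$ send $f$ to $e$. Since $\sum_{e} (\text{number of }\varphi\in\Hc_1\text{ with }\varphi(f)=e) = |\Hc_1| \le \eta' p^{\e(H)}n^h \cdot(\text{const})$ and there are $\e(G)$ edges, a Markov-type averaging argument shows the embeddings using overloaded edges number at most $\frac{1}{1+\ve}\cdot(\text{const})\cdot\eta' p^{\e(H)}n^h$; choosing the constants so that this is at most half of $|\Hc_1|$, delete these embeddings to get $\Hc$. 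One must check that this deletion does not destroy property (2) — it cannot, since (2) is a statement of the form ``few embeddings in the family have such-and-such property,'' which is preserved under passing to a subfamily. Finally verify $|\Hc|\ge\eta' p^{\e(H)}n^h$ by tracking that each of the two deletion steps removed at most a quarter (say) of $\Hc_0$.

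The main obstacle I expect is the precise bookkeeping in property~(2): getting the right power of $n$ and $p$ out of the double count so that the threshold $g(n,H,F,\ve,\gamma)$ comes out exactly as stated, in particular correctly invoking Theorem~\ref{problowerbound} to bound the number of partial embeddings of $F$ that sit inside copies of $H$ in $G$ (rather than the trivial $n^\ell$ bound, which would be too weak). The two-term max in $g$ strongly suggests the argument splits into a regime where the ``copies of $F$'' count dominates and one where the Turán bound for $H$ (equivalently the inequality $(\ell-1)+(1-\e(F))(1-\alpha)\ge 1$) dominates, and balancing these two contributions against $\gamma p^{\e(H)-\e(F)}n^{h-\ell}$ is the delicate part; everything else is a routine deletion-and-averaging argument.
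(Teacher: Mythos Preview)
Your overall strategy---start from the full family $\Hc_0$ of embeddings supplied by Erd\H{o}s--Simonovits goodness and then delete the ``bad'' ones---is genuinely different from the paper's, and it has a real gap. The paper does \emph{not} delete: it starts with $\Hc=\emptyset$ and \emph{builds up} one embedding at a time, maintaining (2) and (3) throughout. The reason this matters is concentration. The lemma makes no regularity assumption on $G$, so nothing prevents a single edge $e_0$ (or a single pair $(\psi_0,u_0)$) from lying in a large constant fraction---even all---of the embeddings in $\Hc_0$. In that case your ``overloaded edge'' step for (3), or your ``heavy triple'' step for (2), would delete essentially everything. Your double count $\sum_{(\psi,u)}\deg_{\Hc_0}(\psi,u)=(h-\ell)|\Hc_0|$ bounds the \emph{number} of heavy triples, but gives no useful bound on the number of \emph{embeddings contained in} heavy triples, which is what you must control; that sum could sit entirely on one triple.

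The paper's fix is the idea you are missing: a random-patch argument. At each step, let $B$ be the edges already near their cap in (3) and $S$ the pairs $(\psi,u)$ near their cap in (2); a short double count (using only $|\Hc|<\eta' p^{\e(H)}n^h$, not $|\Hc_0|$) gives $|B|\le \tfrac{2}{\ve}pn^2$ and $|S|\le \tfrac{2\eta' h}{\gamma}p^{\e(F)}n^\ell$. Delete $B$ to get $G'$ with $\e(G')\ge pn^2$. Now sample a uniformly random $W\subseteq V(G')$ of size $qn$, and from $G'[W]$ remove one edge of $\psi(F)$ for every $(\psi,u)\in S$ lying entirely in $W$. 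Because an $S$-member spans $\ell+1$ vertices while an edge spans $2$, the expected number of removed edges is $\sim q^{\ell+1}|S|$ against $\sim q^2 pn^2$ expected edges; so shrinking $q$ dilutes bad configurations faster than edges. One then applies E--S goodness \emph{inside the patch} and averages over patches (Jensen) to get $\ge \eta' p^{\e(H)}n^h$ embeddings of $H$ avoiding both $B$ and $S$, any one of which can be added to $\Hc$.

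This is precisely where the two terms of $g$ come from: the constraint $q^{\ell-1}\lesssim \tfrac{\gamma}{ \eta' h}\,p^{1-\e(F)}n^{2-\ell}$ (bad configs don't eat the patch) must be compatible with $q\gtrsim p^{-1/(1-\alpha)}n^{-1}$ (patch dense enough for E--S goodness), and equating these yields the second term of $g$; Theorem~\ref{problowerbound} is used only to check the sign of the exponent so the inequality flips correctly. The first term of $g$ is just the condition that the cap in (2) exceeds~$1$, so that adding a single embedding cannot overshoot. Your sketch does not recover either constraint, and in particular your proposed role for Theorem~\ref{problowerbound} (``bounding partial embeddings of $F$ inside copies of $H$'') is not how it enters.
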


\begin{proof}

  Let $\Hc = \emptyset$ and $\beta' = \frac{\beta}{2^{3\e(H)}}$. Clearly, $\Hc$ satisfies item $(2), (3)$. Throughout the proof, we would interpret and refer to embeddings of $H$ into $G$ as copies of $H$ in $G$. We will slowly add copies of $H$ to $\Hc$ maintaining (2), (3) until $\Hc$ has the desired size. The main idea is to look at a collection of ``patches" of some fixed size which avoids the copies of $F$ that have been covered many times already, and then recruit a new copy of $H$ from this collection.

So assume $|\mathcal{H}| < \beta' p^{\e(H)} n^h$ and satisfies $(2), (3)$. We want to find an additional copy of $H$ which we can recruit so that the family $\mathcal{F}$ would continue to retain properties $(2), (3)$.

To begin, let $B$ be the collection of edges $e  \in E(G)$ such that the number of $\varphi$ in $\mathcal{H}$ with $\varphi(e) = f$ is at least $ \frac{\varepsilon \beta '}{2} p^{\e(H) - 1} n^{h - 2} > (1+\frac{\varepsilon}{2})\frac{|\mathcal{H}|}{\e(G)}$. Then, we obtain the following upper bound for the size of $B$:

\begin{claim}
$|B| < \frac{2}{\ve} pn^{2}$.
\end{claim}

\begin{proof}
Let us consider $\mu$, the number of pairs $(e, \varphi)$ with $\varphi \in \Hc$ and $e \in B$ with $\varphi(f) = e$.  On one hand, we have a lower bound $\mu \geq \frac{\varepsilon \beta '}{2} p^{\e(H) - 1} n^{h - 2} |B|$. On the other hand,  we have an upper bound of the form $$\mu \leq |\Hc| \leq \beta '  p^{\e(H)} n^h.$$ Combining the two bounds gives the desired upper bound for $|B|$.
\end{proof}

Let $G'$ be the graph formed by removing every edge in $B$. Observe that $G'$ satisfies $pn^2 \leq e(G') \leq (1 + \frac{2}{\ve} )pn^2$.

For a given $\psi: F \rightarrow G'$ and $u \not \in \psi(F)$, let $\deg_{\mathcal{H}}(\psi, u)$ denote the number of $\varphi$ in $\mathcal{H}$ with $\varphi|_{H} = \psi$ and $u \in \varphi(H)$. Let $S$ be the collection of pairs $(\psi, u)$ with $\deg_{\mathcal{H}}(\psi, u) \geq \frac{\gamma}{2} p^{\e(H) - \e(F)}n^{h - \ell}$. Then we immediately obtain the following upper bound for the size of $S$.
\begin{claim}
$|S| \leq \frac{2 \beta ' h}{\gamma} p^{\e(F)}n^{\ell}.$
\end{claim}
\begin{proof}
Let $\mu$ be the number of pairs $((\psi, u), \varphi)$ with $\varphi \in \F$ and $(\psi, u) \in S$ with $\varphi|_{F} = \psi$ and $u \in \varphi(H)$. On one hand, we have 
$$\frac{\gamma}{2} p^{\e(H) - \e(F)} n^{h-\ell} |S| \leq \mu. $$ On the other hand, we have that $$\mu \leq h |\Hc| < \beta ' h p^{\e(H)}n^h$$ for there are at most $h$ many choices for $u$ inside $\varphi(H)$ for each $\varphi \in \mathcal{H}$. Combining the two bounds gives the desired upper bound for $|S|$.
\end{proof}

The plan for the rest of the proof is to randomly sample patches of a certain size, $qn$, from which we could further obtain subgraphs of high density and containing no members of $S$, and finally count potential new copies of $H$ by applying $(\alpha, C, \beta)$-supersaturatedness to each patch and recruit a new copy of $H$.

So let $0\leq q\leq 1$ be a rational number to be determined later.  For every set $W\subseteq V(G)$ of size $qn$, let $X(W)$ be the the number of $(\psi, u)\in S$ such that $\psi, u$ are in $W$.  Let $Y$ be the graph formed from $G'[W]$ by removing one edge from $\psi(F)$ for each $S$-member $(\psi, u)$ for which $\psi(F)$ and $u$ are both in $W$.   

Let $W_0$ be a random subset of $V(G)$ of size $qn$ chosen with uniform probability from the sample space of all subsets of size $qn$, and $X_0, Y_0$ be defined as $X_0=X(W_0)$ and $Y_0=Y(W_0)$ as random variables.  Thus, by Lemma~\ref{bounds}, we have that $\EE[\e(G'[W_0])] = pn^2\frac{\binom{n-2}{qn-2}}{\binom{n}{qn}} \geq \frac{q^2pn^2}{2}$. Then, similarly by Lemma~\ref{bounds}, $\EE[X_0] = \frac{\binom{n - \ell - 1}{qn - \ell - 1}}{\binom{n}{qn}}|S|\leq q^{\ell + 1} |S|$ as each element in $S$ spans $\ell + 1$ vertices. And thus, $\EE[\e(Y_0)] \geq \frac{q^2pn^2}{2} - \frac{2 \beta' h}{\gamma} q^{\ell + 1} p^{\e(F)}n^{\ell}$ for all sufficiently large $n$. We seek to find a value of $q$ so that this inequality holds: 
\begin{equation*}
 \frac{q^2pn^2}{2} - \frac{2 \beta' h}{\gamma} q^{\ell + 1} p^{\e(F)}n^{\ell} \geq 2C(qn)^{1 + \alpha}
\end{equation*} 
Yet, as long as 
\begin{align*}
\frac{2 \beta' h}{\gamma} q^{\ell + 1} p^{e(F)}n^{\ell} &\leq \frac{1}{4} q^2 pn^2 \\
q^{\ell - 1} &\leq \frac{\gamma}{8 \beta' h} p^{1 - e(F)} n^{2 - \ell} \\
q &\leq \left( \frac{\gamma}{8 \beta' h}  \right)^{\frac{1}{\ell - 1}} p^{\frac{1 - \e(F)}{\ell - 1}}n^{-1 + \frac{1}{\ell - 1}}
\end{align*}
we would have the following bound
\[ \EE[\e(Y)] \geq \frac{1}{4}q^2 pn^2. \] 

This is at least $2C(qn)^{1 + \alpha}$ when: 

\begin{align*}
\frac{1}{4}q^2 pn^2 & \geq 2C(qn)^{1 + \alpha}\\
q &\geq (8C)^{\frac{1}{1 - \alpha}} p^{-\frac{1}{1 - \alpha}} n^{-1}
\end{align*}

Observe that $(16C)^{\frac{1}{1 - \alpha}} p^{-\frac{1}{1 - \alpha}} n^{-1} \geq 2 \cdot (8C)^{\frac{1}{1 - \alpha}} p^{-\frac{1}{1 - \alpha}} n^{-1}$ since $0<1-\alpha<1$. Thus, a choice of $q$ which makes $qn$ an integer exists as long as $q\geq (16C)^{\frac{1}{1 - \alpha}} p^{-\frac{1}{1 - \alpha}} n^{-1}$ in conjunction with the lower bound we have obtained earlier.
\begin{align*}
(16C)^{\frac{1}{1 - \alpha}} p^{-\frac{1}{1 - \alpha}} n^{-1} &< \left( \frac{\gamma}{ 8 \beta' h}  \right)^{\frac{1}{\ell - 1}} p^{\frac{1 - \e(F)}{\ell - 1}}n^{-1 + \frac{1}{\ell - 1}} \\
p^{-\frac{1}{1 - \alpha} - \frac{1 - \e(F)}{\ell - 1}} &<   \left( \frac{ \gamma}{ 8\beta' h}  \right)^{\frac{1}{\ell -1}}  (16 C)^{- \frac{1}{1 - \alpha}}n^{\frac{1}{\ell- 1}}\\
p &> \left( (16 C)^{\ell - 1}\left( 8 \beta' h  \right)^{ 1 - \alpha}\right)^{ \frac{1}{(\ell - 1) + (1 - \e(F))(1 - \alpha)}}  (n\gamma) ^{ - \frac{1 - \alpha}{(\ell - 1) + (1 - \e(F))(1 - \alpha)}}
\end{align*}
since then for all sufficiently large $n$, the interval
\begin{align*}
((8 C)^{\frac{1}{1-\alpha}}p^{-\frac{1}{1-\alpha}},\left( \frac{\gamma}{ 8 \beta' h}  \right)^{\frac{1}{\ell- 1}} p^{\frac{1 - \e(F)}{\ell - 1}}n^{\frac{1}{\ell- 1}}) 
\end{align*}
contains an integer greater than $2$, as the upper bound of the interval is certainly at least twice the lower bound, and the lower bound is at least $2$. Note that the inequality flips because $(\ell - 1) + (1 - \e(F)) ( 1 - \alpha) \geq 1$ by Lemma~\ref{problowerbound}. 

$$p > \left( (16 C)^{\ell - 1}\left( 8 \beta' h \right)^{ 1 - \alpha}\right)^{ \frac{1}{(\ell - 1) + (1 - \e(F))(1 - \alpha)}}  (n\gamma) ^{ - \frac{1 - \alpha}{(\ell - 1) + (1 - \e(F))(1 - \alpha)}}.$$
Notice that we have $$\EE[\e(Y_0)] \cdot\binom{n}{qn}=\sum_{W\subseteq V(G):|W|=qn}{\e(Y)}.$$ Let $\mathcal{Y}_1$ be the set of subgraphs  $Y(W)$ with at least $C(qn)^{1 + \alpha}$ edges and $\mathcal{Y}_2$ be the set of such subgraphs with strictly less than so many edges.

We have that the sum of edges over all $Y$ by the earlier expectation calculation is at least $2 \binom{n}{qn} C(qn)^{1 + \alpha}$.  On the other hand,  the sum over the edges in $\Y_2$ must be strictly less than $ \binom{n}{qn} C(qn)^{1 + \alpha}$,  so at least half the edges in the total count are captured by the members in $\Y_1$. Thus, in particular 
$$\sum_{Y \in \Y_1} \e(Y) \geq \frac{1}{8} \binom{n}{qn} np^2q^2. $$

On the other hand, for each $Y \in \Y_1$,  the number of $\varphi: H \rightarrow G'$ contained in $Y$ is at least $$\beta (qn)^{h}\left(\frac{\e(Y)}{q^2n^2}\right)^{\e(H)}$$ by $H$ being $(\alpha, C, \beta)$-supersaturated. Thus, noting that any $\varphi$ gets counted at most $\binom{n - h}{qn - h}$ times,  we have the following count $\mu$, the number of $\varphi: H \rightarrow G'$ that miss any bad configuration: 
\begin{align*}
\mu \binom{n - h}{qn - h} &\geq \beta \sum_{Y\in \Y_1} (qn)^{h}\left(\frac{\e(Y)}{q^2n^2}\right)^{\e(H)}\\
\mu \binom{n - h}{qn - h} &\geq \beta (qn)^{h - 2 \e(H)} \binom{n}{qn} \left( \sum_{Y\in \Y_1} \frac{\e(Y)}{\binom{n}{qn}} \right)^{\e(H)}\\
\mu \binom{n - h}{qn - h} &\geq \beta (qn)^{h - 2 \e(H)} \binom{n}{qn} \left( \frac{1}{8} q^2pn^2 \right)^{\e(H)}\\
\mu &\geq  \frac{\beta}{8^{\ell}} \frac{\binom{n}{qn}}{\binom{n - h}{qn - h}} (qn)^h p^{\e(H)} \\
\mu &\geq \beta' n^{h}p^{\e(H)}.
\end{align*} Thus as long as $|\Hc| <\beta' n^{h}p^{\e(H)}$,  we may add one $H$ to our collection and keep condition $(2)$. To see this, we notice that all pairs in $W$ would have the same degree as before, while each of the pairs not inside $S$ has its degree increased at most by $1$, we have that the degree of every configuration $(F, u)$ is at most $ \gamma  p^{\e(H) -\e(F)}n^{h - \ell}$ as long as
\begin{align*}
\frac{\gamma}{2} p^{\e(H) -\e(F)}n^{h - \ell} + 1 &< \gamma  p^{\e(H) -\e(F)}n^{h - \ell}\\
\frac{2}{ \gamma} n^{\ell - h }  &< p^{\e(H) - \e(F)}\\
p &> \left(\frac{2}{\gamma}\right)^{\frac{1}{ \e(H) - \e(F)}}n^{- \frac{h- \ell}{\e(H) - \e(F)}}.
\end{align*}

Similarly, we maintain $(3)$ as long as 

\begin{align*}
&1 \leq \frac{\beta' \ve}{2 (1 + \frac{2}{\ve})}p^{\e(H) - 1}n^{h - 2}\\
&p \geq \left( \frac{ 2 (1 + \frac{2}{\ve} )}{\beta' \ve }\right)^{\frac{1}{\e(H) -1}} n^{- \frac{h - 2}{\e(H) - 1}}\\
&p \geq \left(\frac{2}{\ve} + \frac{4}{\ve^2} \right)^{\frac{1}{\e(H) - 1}} \left( \frac{1}{\beta'}\right)^{\frac{1}{\e(H) - 1}}n^{- \frac{h - 2}{\e(H) - 1}}. 
\end{align*}
This will be dominated by the second bound on $p$ provided $\ve$ is constant with respect to $n$ by Theorem~\ref{problowerbound}. 

The result then holds with 

$$C' = \max\left\{\left( (16 C)^{\ell - 1}\left( 8 \beta' h \right)^{ 1 - \alpha}\right)^{ \frac{1}{(\ell - 1) + (1 - \e(F))(1 - \alpha)}} , 2^\frac{1}{\e(H) - \e(F)}\right\}.$$ 
\end{proof}

\section{Proofs of Main Results}

\begin{theorem}[Restatement of Theorem~\ref{mainmain}]\label{general}
Let $H$ be an $(\alpha, C, \beta)$-good graph with $h$ vertices, and let $F$ be a proper forest contained in $H$ with $\ell$ vertices. Let $H^*$ be formed from $H$ by taking $s$ copies of $H$ and gluing them along $F$. Let $\alpha' = \max\{ 1 - \frac{h - \ell}{\e(H) - \e(F)}, 1 - \frac{1 - \alpha}{\ell - 1 + (1 - \e(F))(1 - \alpha)}\}$. 
Then, there exists a constant $C'$ such that
$$ \ex(n, H^*) \leq C'n^{1 + \alpha'} $$ and moreover for $ H^*$ is $(\alpha', C', \beta'')$-supersaturated for $\beta'' = \frac{1}{4^{e(H)}}\left( \frac{\beta}{4 \cdot 8^{e(H)}(6K)^{\e(F)}}\right)^s $. 

\end{theorem}

\begin{proof} Let $K, C_{\ref{reduction}}$ be as in Theorem~\ref{reduction}. Let $C_{\ref{first-cleaning}}, \beta', n_0$ be returned by Lemma~\ref{first-cleaning} for $H, F$. Let $\gamma = \frac{\beta'}{4 sh (6K)^{\e(F)}}$. Let $g = g(n, H, F, 1, \gamma)$ be as in Lemma~\ref{first-cleaning} applied with $n, H, F,\gamma$, and $\ve = 1$. Let $p \geq g$. 

Let $C = \max\{n_0, C_{\ref{reduction}},C_{\ref{first-cleaning}}\}\cdot \max\{\gamma^{ -\frac{1}{\e(H) - \e(F)}}, \gamma^{\frac{\alpha - 1}{(\ell - 1) + (1 -\alpha)(1 - \e(F))}}\}$. Following Theorem~\ref{reduction}, it suffices to examine a graph $G$ which is $K$-almost-regular. Let $G$ be a $K$-almost-regular graph with $e(G) = 3pn^{2}$. Note that in such a graph $G$, $\Delta(G) \leq 6Kpn$, so the number of embeddings $\psi$ of $F$ into $G$ is at most $n^{\ell}(6Kp)^{\e(F)}$. Denote this family of $\psi: F \rightarrow G$ as $\F$.  Let $\mathcal{H}$ be a collection of $\varphi: H \rightarrow G$ returned by Lemma~\ref{first-cleaning} with $\gamma$ and $\ve = 1$. For each $\psi:F\rightarrow G$, let $\deg_{\mathcal{H}}(\psi) = |\{ \varphi \in \mathcal{H}: \varphi|_F = \psi\} |$. 

We divide the family $\F$ into two subfamilies, $$\F_{\text{heavy}} = \{ \psi : \deg_\mathcal{H}(\psi) \geq \frac{ \beta'}{2^{4} ( 6 K)^{\e(F)}} p^{\e(H) -\e(F)}n^{h - \ell} \}$$ and $$\F_{\text{light}}= \{ \psi : \deg_\mathcal{H}(\psi) < \frac{\beta'}{2^{4 } (6 K )^{\e(F)}} p^{\e(H) - \e(F)}n^{h - \ell} \}.$$ 
\begin{claim}
$$\sum_{\psi \in \F_{\rm{light}}} \deg_\mathcal{H}(\psi) < \frac{1}{2}  |\Hc|.$$
\end{claim}
\begin{proof}
This is not hard to show. Indeed, 

\begin{align*}\sum_{\psi \in \F_{\text{light}}} \deg_\mathcal{H}(\psi) &\leq |\mathcal{F}| \max \{\deg_\mathcal{H}(\psi):\psi\in \mathcal{F}_{\text{light}}\}  \\
&\leq n^{\ell}(6K p)^{\e(F)} \cdot \frac{\beta'}{2^{4}(6 K )^{\e(F)}} p^{\e(H) -\e(F)}n^{h - \ell}  \\
&<\frac{\beta'}{2^4} p^{\e(H)}n^h \leq \frac{1}{2}|\mathcal{H}|.
\end{align*}

\end{proof}

Thus, $\sum_{\psi \in \F_{\text{heavy}}} \deg_\mathcal{H}(\psi) \geq \frac{1}{2} |\mathcal{H}|$.

Fix $\psi \in \F_{\text{heavy}}$, then the number of labeled copies of $H^*$ rooted at $\psi(F)$ is at least 
$$\prod_{ 0 \leq i < s } \left( \deg_{\Hc}(\psi) -  i h  \gamma p^{\e(H) -\e(F)}n^{h - \ell} \right) .$$ Indeed, having picked $(i - 1)$ copies of $H$, they block no more than $(i - 1)h \gamma p^{\e(H) - \e(F)}n^{ h - \ell}$ copies of $H$ for the $i^{\text{th}}$ step by Lemma~\ref{first-cleaning} part (2). We may lower bound this by $\frac{1}{2^s} \deg_{\Hc}(\psi)^s$. Indeed, by choice of $\gamma$, we guaranteed that 
$$\deg_{\Hc}(\psi) \geq \frac{\beta'}{2^4 (6K)^{\e(F)}}p^{\e(H) - \e(F)} n^{h - \ell } =  2 sh \gamma p^{\e(H) - \e(F)} n^{h - \ell}.$$  Thus, the number of copies of $H^*$ in $G$ by Jensen's inequality is at least as follows, where in the fifth inequality we use $|\Hc| \geq \beta' p^{e(H)}n^h$ by Lemma~\ref{first-cleaning} (1). 
\begin{align*}
&\geq \sum_{\psi \in \F_{\text{heavy}}} \frac{1}{2^s} \deg_\Hc(\psi)^s \\
&\geq \frac{|\F_{\text{heavy}}|}{2^s} \left( \sum_{\psi \in \F_{\text{heavy}}} \frac{\deg_\Hc(\psi)}{|\F_{\text{heavy}}|}\right)^s\\
&\geq \frac{|\F_{\text{heavy}}|^{1 - s}}{2^s} \left( \sum_{\psi \in \F_{\text{heavy}}} \deg_\Hc(\psi)\right)^s\\
&\geq \frac{1}{2^{s}} ((6Kp)^{\e(F)}n^{\ell})^{s - 1} \left(\frac{1}{2} |\Hc|\right)^{s}\\
&\geq \frac{1}{2^{2s}} ((6Kp)^{\e(F)}n^{\ell})^{s - 1} \left( \beta' p^{e(H)} n^h\right)^{s}\\
&\geq \left( \frac{\beta'}{4(6K)^{\e(F)}}\right)^s p^{\e(F)} n^\ell( p^{\e(H) - \e(F)}n^{h - \ell} )^s.
\end{align*}
\end{proof}

We record the following corollary, though note that it is likely only tight when $T$ is a star. 
\begin{corollary}[Restatement of Corollary~\ref{maintree}]
Let $T$ be a tree on $t \geq 3$ vertices with $r$ leaves,  and let $T^q$ be formed by gluing $q$ copies of $T$ along the leaves.  Then, 
there exist constants $C, \beta$ such that $T^q$ is supersaturated with respect to $( 1 -\frac{1}{r},C, \beta)$. 
\end{corollary}
\begin{proof}
Let $T'$ be the tree formed by extending one leaf of $T$, and let $F$ be the forest containing this new edge $f$ and the remaining leaves. Let $T^*$ be the graph formed from $q$ copies of $T'$ identified along the forest $F$. 

Let $G$ be a $K$-almost-regular graph with $e(G) = pn^2 \geq C\max\{ n^{ 2 - \frac{t - r}{t - 1}}, n^{ 2 - \frac{1}{r}}\}$, with $C$ returned by Theorem~\ref{general} applied with $T'$ and $F$.  Note that the second bound is always larger as $t \geq r + 1$ and $2 - \frac{t - r - 1}{t - 1}$ is monotonically decreasing in $t$.

Then, Theorem~\ref{general} applied to $T'$ and $F$ in $G$ graph gives at least 
$$\Omega(n^{q(\vr(T) - r) + r + 1}p^{q \e(T) + 1})$$
copies of $T^*$ in $G$. Since any copy of $T^q$ gives no more than $2Kpn$ copies of $T^*$, we have that there are at least $\Omega(n^{q(\vr(T) - r) + r }p^{q \e(T)})$ copies of $T^q$ in $G$. By Lemma~\ref{reduction}, $T^q$ is $(1-\frac{1}{r}, C, \beta)$-supersaturated for some $C, \beta$. 
\end{proof}

The following immediately implies Theorem~\ref{maingluingalongedge}. 

\begin{theorem}\label{mixedfamilies}
Let $H_1, H_2, \dots ,H_t$ be graphs such that each $H_i$ is supersaturated with respect to $( \alpha_i, C_i, \beta_i)$. Fix an edge $f_i$ in each $H_i$, and let $\Hc$ be the set of all possible graphs formed by gluing $H_i$'s along $f_i$.

Letting $\alpha = \max_{1\leq i \leq t}(\alpha_i)$, then 

$$\ex(n, \Hc) = O(n^{1 + \alpha}).$$

In particular, if $|\Hc| = 1$, then the unique resultant graph $F \in \Hc$ is supersaturated with respect to $( \alpha,C, \beta)$ for some constants $C, \beta$.
\end{theorem}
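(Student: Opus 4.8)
\textbf{Proof proposal for Theorem~\ref{mixedfamilies}.}

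The plan is to reduce to the $K$-almost-regular setting using Theorem~\ref{reduction} and then iterate the cleaning lemma (Lemma~\ref{first-cleaning}) once for each $H_i$, gluing the resulting copies along the prescribed edges. First I would fix $\alpha = \max_i \alpha_i$ and observe that since each $H_i$ is $(\alpha_i, A_i, \eta_i)$-Erd\H{o}s-Simonovits good and $\alpha \geq \alpha_i$, each $H_i$ is also $(\alpha, A_i', \eta_i')$-Erd\H{o}s-Simonovits good for suitable constants (a graph that is $\alpha_i$-good is automatically $\alpha$-good for any $\alpha \geq \alpha_i$, by monotonicity of the density threshold $Cn^{1+\alpha}$). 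So without loss of generality all $H_i$ share the exponent $\alpha$. Since $H^* \in \Hc$ contains a cycle (each $H_i$ with $\e(H_i) \geq 2$ contributes one, and if some $H_i$ is a single edge the gluing is degenerate and handled trivially), Theorem~\ref{reduction} lets us work only with a $K$-almost-regular host graph $G$ on $n$ vertices with $\e(G) \geq A' n^{1+\alpha}$, provided we can count copies of members of $\Hc$ in such $G$ at the expected rate.

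The heart of the argument: apply Lemma~\ref{first-cleaning} to $H_1$ with $F$ taken to be the single edge $f_1$, with $\ve = 1$ and a suitably small constant $\gamma$, to get a family $\Hc_1$ of at least $\eta' p^{\e(H_1)}n^{v(H_1)}$ embeddings of $H_1$ in $G$ in which no edge $e \in E(G)$ plays the role of $f_1$ in more than $(1+\ve)\eta' p^{\e(H_1)}n^{v(H_1)}/\e(G) = O(p^{\e(H_1)-1}n^{v(H_1)-2})$ members. Now, many edges $e$ of $G$ are used as $f_1$ by $\Omega(p^{\e(H_1)-1}n^{v(H_1)-2})$ members of $\Hc_1$ (by averaging, a positive fraction of the $\Theta(pn^2)$ edges carry a positive fraction of the count). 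For each such "good" edge $e$, I independently apply Lemma~\ref{first-cleaning} (or rather the Erd\H{o}s-Simonovits goodness directly, treating the rest of $G$ as the host) to build, on the vertex set disjoint from the $H_1$-copy outside $e$, a copy of $H_2$ using $e$ as $f_2$, then $H_3$ using $e$ as $f_3$, and so on up to $H_t$ — exactly as in the proof of Theorem~\ref{general}, where the role of the rooting forest is played here by the single shared edge. The part-(2) guarantee of the cleaning lemma (no vertex outside the rooted edge lies in too many extensions) is what lets the successive extensions avoid each other's vertices, so the product telescopes and we get $\Omega(\prod_i p^{\e(H_i)-1} n^{v(H_i)-2})$ many tuples rooted at each good edge $e$; summing over the $\Theta(pn^2)$ good edges and correcting for overcounting (each labeled copy of a glued graph is counted a bounded number of times, since its glued edge is determined) yields $\Omega(p^{\sum \e(H_i) - (t-1)} n^{\sum v(H_i) - 2(t-1)})$ copies of some member of $\Hc$, which matches the random-graph count for a graph with $\sum \e(H_i) - (t-1)$ edges and $\sum v(H_i) - 2(t-1)$ vertices. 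Since the glued graph depends on the orientation chosen at each gluing, we only guarantee copies of \emph{some} member of $\Hc$, hence the statement is about $\ex(n, \Hc)$; when $|\Hc| = 1$ we get the full Erd\H{o}s-Simonovits goodness of the unique graph $F$, with exponent $\alpha$, after one more application of Theorem~\ref{reduction} to pass back from the almost-regular setting.

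The main obstacle I anticipate is bookkeeping the orientations and the "same graph" issue when $t > 2$: gluing $t$ graphs along $t$ edges admits up to $2^{t-1}$ distinct labeled outcomes, and at each edge the cleaning lemma hands us embeddings with a \emph{fixed} image edge but not a fixed orientation of that edge, so a given good edge $e$ of $G$ may be simultaneously realizing different members of $\Hc$ depending on how the $+/-$-like endpoints of each $f_i$ land on the two endpoints of $e$. The fix is pigeonhole: among the (constantly many) orientation patterns, one is realized by a positive fraction of the counted tuples at a positive fraction of the good edges, so we still extract $\Omega(\cdot)$ copies of one specific member of $\Hc$ — this only costs another constant factor and does not change the exponent. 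A secondary technical point is checking that the density threshold $g(n, H_i, f_i, 1, \gamma)$ from Lemma~\ref{first-cleaning}, maximized over $i$, is still of the form $C n^{\alpha}$ (equivalently $p \geq C n^{\alpha - 1}$): this follows because for the single-edge forest $F = f_i$ we have $\ell = 2$, so $g$ reduces to $C\max(n^{-(v(H_i)-2)/(\e(H_i)-1)}, (n\gamma)^{(\alpha-1)/(1 + (1-\e(F))(1-\alpha))})$ with $\e(F) = 1$, i.e. the second term is exactly $(n\gamma)^{\alpha - 1}$, and the first term is dominated by $n^{\alpha-1}$ by Theorem~\ref{problowerbound} applied with $F = H_i$ — so everything is consistent with $pn^2 = \Theta(n^{1+\alpha})$.
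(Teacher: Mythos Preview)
Your overall strategy is on the right track, but there is a genuine gap at the step where you extend a copy of $H_1$ by copies of $H_2,\dots,H_t$ through the same edge. You identify edges $e$ that are ``good'' for $H_1$ (i.e.\ lie in many members of $\Hc_1$), and then write ``apply Lemma~\ref{first-cleaning} (or rather the Erd\H{o}s--Simonovits goodness directly)\dots to build a copy of $H_2$ using $e$ as $f_2$''. But neither tool gives you what you need here: ES-goodness is a global count and says nothing about a prescribed edge, and Lemma~\ref{first-cleaning}(3) is only an \emph{upper} bound on the number of copies through any edge, not a lower bound. An edge that is heavy for $H_1$ may be light for $H_2$, so the greedy extension can fail. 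Your appeal to the proof of Theorem~\ref{general} does not help, because there all $s$ copies are of the \emph{same} graph $H$, so an edge heavy for one copy is automatically heavy for all.

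What the paper actually does, and what your sketch is missing, is the following synchronisation step: apply Lemma~\ref{first-cleaning} \emph{once for each} $H_i$ (with $F=f_i$, $f=f_i$) to obtain families $\F_1,\dots,\F_t$, and then delete from every $\F_j$ all embeddings whose designated edge $f'$ satisfies $\deg_{\F_i}(f') \le \tfrac{\ve}{2}\,|\F_i|/\e(G)$ for \emph{some} $i$. The upper bound in part~(3) is what controls how many edges get deleted: with $\ve=\tfrac{1}{2t^2}$ one computes that at most $\ve\,\e(G)$ edges are light in each $\F_i$, hence at most $\tfrac{1}{2t}\e(G)$ in total, and the surviving families $\F_i'$ still have $|\F_i'|\ge (1-\tfrac{1}{t})|\F_i|$. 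Now every surviving edge is heavy in \emph{all} families simultaneously, and the greedy build (using part~(2) for disjointness) goes through. Note that your choice $\ve=1$ would not suffice for this: with $\ve=1$ the light-edge bound is only $x_i\le \tfrac{2}{3}\e(G)$, and summing over $t\ge 2$ indices could wipe out all edges. Finally, the reduction to $K$-almost-regular via Theorem~\ref{reduction} is not needed here at all; the paper works directly in an arbitrary host $G$.
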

\begin{proof}
Let $H_1, H_2, \dots, H_t$ be graphs such that each $H_i$ is $(\alpha_i, C_i, \beta_i)$-good. Let $s = \sum_{i = 1}^{t} v(H_i)$. Let $\ve = \frac{1}{2t^2}$ and $\gamma = \frac{\ve}{4st(1 + \frac{2}{\ve}) }\min_{i = 1}^t\left\{\beta_i' \right\}$. Let $G$ be a graph with at least $(1+\frac{2}{\ve})pn^2$ edges with $p\geq g(n, H_i, f_i, \ve, \gamma)$ for every $i \in [t]$.  For an edge $f'\in E(G)$, define $\deg_{\F_i}(f') = |\{ \varphi \in \F_i : \varphi(f_i) = f' \}|$. 

Let each $\F_i$ be a family of copies of $H_i$ in $G$ satisfying Lemma~\ref{first-cleaning} with respect to $\gamma, \ve, F = f_i, f = f_i$. Now, we clean $\F_i$ to obtain subfamilies $\F_i'$ with respect to the following rule: 

For every edge $f' \in E(G)$, if $\deg_{\F_i}(f') \leq \frac{\ve}{2} \frac{|\F_i|}{\e(G)}$ for some $i\in [t]$, then remove from every $\F_i$ the member $\varphi:H_i \rightarrow G$ such that $\varphi(f_i) = f'$. 

We claim that there are many copies of $H_i$'s left in $\F_i'$. In particular, only a few edges in $G$ are involved in the above process. 

For each $\F_i$, let $x_i$ be the number of edges in $G$ whose degree is less than $\frac{\ve}{2} \frac{|\F_i|}{\e(G)}$. Observe that 
$$|\F_i| \leq (1 + \ve)\frac{|\F_i|}{\e(G)}(e(G) - x_i) + \frac{\ve}{2} \frac{|\F_i|}{\e(G)} x_i. $$

Indeed, every $\psi: H_i \rightarrow G$ in $\F_i$ either sends $f_i$ to a high-degree edge or a low-degree edge. Since by Lemma~\ref{first-cleaning} part (3), for any $f'\in E(G)$ and every family $\F_i$, $\deg_{\F_i}(f') \leq (1 + \ve) \frac{|\F_i|}{\e(G)}$, the statement follows. Thus, $x_i \leq \ve \e(G)$. Since we selected $\ve = \frac{1}{2t^2}$, we have that at most $\frac{\e(G)}{2t}$ bad edges are involved in this process. Thus, $|\F_i'| \geq |\F_i| - (1 + \frac{1}{2t^2})\frac{ |\F_i|}{\e(G)}\frac{\e(G)}{2t} \geq (1 - \frac{1}{t})|\F_i|$. 

As we picked $\gamma \leq \frac{\ve}{4st(1 + \frac{2}{\ve})} \beta_i'$ for every $i$, we can greedily build copies of our desired graph. Select any member $\varphi \in \F_1'$. Let $\varphi(H_1)=H_1'$. Observe there are at least $(1 - \frac{1}{t})|\F_1|$ such $\varphi$ in $\F'_1$. Define $f := \varphi(f_1)$. Observe that $f$ has high minimum degree in every $\F_i'$ by construction, at least
\begin{align*}
&\geq \frac{\ve}{2} \frac{|\F_2|}{\e(G)} -  v(H_1) \gamma p^{\e(H_2) - 1}n^{v(H_2) - 2}\\
&\geq \frac{\ve}{2} \frac{|\F_2|}{\e(G)} -   \frac{\ve v(H_1)}{4st} \frac{|\F_2|}{\e(G)}\\
&\geq \frac{\ve}{4} \frac{|\F_2|}{\e(G)}
\end{align*} copies of $H_2$ which intersect $H_1'$ only at $f$. Pick such a copy $H_2'$.

The same argument works for $H_i$. After we have chosen $H_1', H_2', \dots ,H_{i - 1}'$, there are at least $\frac{\ve}{4} \frac{|\F_i|}{\e(G)}$ many copies of $H_i$ avoiding $H_1', H_2', \dots ,H_{i - 1}'$. 
Thus, the process finds at least $$\left( 1 - \frac{1}{t} \right) |\F_1| \left( \frac{\ve}{4} \right)^{t - 1}\prod_{i = 2}^{t} \frac{|\F_i|}{\e(G)}$$ copies of some graph in $\Hc$. Recalling that $|\F_i| \geq \beta_i' p^{\e(H_i)} n^{\vr(H_i)}$, we have found $\Omega(p^{-t + \sum_{i = 1}^t\e(H_i)}n^{-2t + 2 + \sum_{ i = 1}^t \vr(H_i)})$ copies of some graph in $\Hc$. Thus, $\ex(n, \Hc) = O(n^{ 1+ \alpha})$. If furthermore $|\Hc|  = 1$, then the unique $F \in \Hc$ is $(\alpha, C, \beta)$-supersaturated.

\end{proof}

We observe the following easy lemma on gluing an $\alpha$-supersaturated graph and a tree at a vertex. This is a counting version of the analogous Lemma $12$ in \cite{faudree1983class}. 

\begin{lemma}\label{gluewithtree}
Suppose $H$ is an $(\alpha, C, \beta)$-supersaturated graph. Let $T$ be any tree. Fix two vertices $v, v'$ such that $v \in V(H)$, $v' \in V(T)$. Then, the graph $H^*$ formed by identifying $v$ with $v'$ is $(\alpha, \max\{ 2C, 4(\vr(H) + \vr(T))\}, \frac{\beta}{2^{\e(H) +2\e(T)}})$-supersaturated. 
\end{lemma}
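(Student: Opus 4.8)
The plan is a quantitative version of the greedy tree-attachment argument behind Lemma~$12$ of \cite{faudree1983class}. I would set $C := \max\{2A,\, 4(\vr(H)+\vr(T))\}$, take an arbitrary $G$ on $n$ vertices with $\e(G) = pn^{2} \ge C n^{1+\alpha}$, and aim to produce at least $\frac{\eta}{2^{\e(H)+2\e(T)}}\, p^{\e(H^{*})}\, n^{\vr(H^{*})}$ copies of $H^{*}$. The first move is to clean $G$ to a subgraph of large minimum degree: repeatedly delete any vertex of current degree below $pn/2$. At most $n$ deletions occur, each removing fewer than $pn/2$ edges, so the resulting subgraph $G'$ has $\e(G') > pn^{2} - n\cdot\tfrac{pn}{2} = \tfrac{pn^{2}}{2} > 0$; thus $G'$ is nonempty and, since no further deletion is possible, $\delta(G') \ge pn/2$. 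Write $n' := \vr(G') \le n$.

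Next I would count copies of $H$ in $G'$; this is where the one genuinely non-obvious point sits. Since $\e(G') > \tfrac{pn^{2}}{2} \ge \tfrac{C}{2}n^{1+\alpha} \ge A n^{1+\alpha} \ge A(n')^{1+\alpha}$ (using $C\ge 2A$ and $n'\le n$), the $(\alpha,A,\eta)$-goodness of $H$ applies to $G'$, giving at least $\eta\bigl(\e(G')/(n')^{2}\bigr)^{\e(H)}(n')^{\vr(H)} = \eta\,\e(G')^{\e(H)}(n')^{\vr(H)-2\e(H)}$ copies of $H$ in $G'$. One might worry this is far below $p^{\e(H)}n^{\vr(H)}$ since $n'$ may be much smaller than $n$; but $H$ has no isolated vertices, so $\vr(H)\le 2\e(H)$, the exponent $\vr(H)-2\e(H)$ is $\le 0$, and hence $(n')^{\vr(H)-2\e(H)} \ge n^{\vr(H)-2\e(H)}$. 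Using $\e(G') > pn^{2}/2$ then gives at least $\eta(pn^{2}/2)^{\e(H)}n^{\vr(H)-2\e(H)} = \tfrac{\eta}{2^{\e(H)}}p^{\e(H)}n^{\vr(H)}$ copies of $H$ in $G'$.

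Finally I would glue $T$ onto each such copy. Fix a copy $\varphi\colon H\to G'$ and set $w:=\varphi(v)$; since $w\in V(G')$, $\deg_{G'}(w)\ge pn/2$. Pick a tree order $v'=u_{1},u_{2},\dots,u_{\vr(T)}$ of $V(T)$, each $u_{j}$ ($j\ge2$) having a unique neighbour among $u_{1},\dots,u_{j-1}$. Set $u_{1}\mapsto w$ and place the remaining $u_{j}$ one by one: the already-placed neighbour of $u_{j}$ sits at some $z\in V(G')$ with $\deg_{G'}(z)\ge pn/2$, and $u_{j}$ only needs to avoid the at most $\vr(H)+\vr(T)$ vertices already used, leaving at least $pn/2-\vr(H)-\vr(T)\ge pn/4$ choices, since $pn\ge Cn^{\alpha}\ge C\ge 4(\vr(H)+\vr(T))$. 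So for each $\varphi$ there are at least $(pn/4)^{\vr(T)-1}$ embeddings $\psi\colon T\to G'$ with $\psi(v')=w$ meeting $\varphi(H)$ only at $w$, and each pair $(\varphi,\psi)$ assembles into a distinct copy of $H^{*}$ in $G$ (the images cover all edges of $H^{*}$ and overlap precisely in $w=\varphi(v)=\psi(v')$). Hence $G$ contains at least
\[
\frac{\eta}{2^{\e(H)}}p^{\e(H)}n^{\vr(H)}\cdot\Bigl(\frac{pn}{4}\Bigr)^{\vr(T)-1}
= \frac{\eta}{2^{\e(H)}\,4^{\vr(T)-1}}\,p^{\e(H)+\vr(T)-1}\,n^{\vr(H)+\vr(T)-1}
\]
copies of $H^{*}$; using $\e(T)=\vr(T)-1$ the $p$-exponent is $\e(H)+\e(T)=\e(H^{*})$, the $n$-exponent is $\vr(H)+\e(T)=\vr(H^{*})$, and $2^{\e(H)}4^{\vr(T)-1}=2^{\e(H)+2\e(T)}$, which is exactly the asserted $(\alpha,C,\eta/2^{\e(H)+2\e(T)})$-goodness of $H^{*}$.

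The main obstacle is the point flagged in the second paragraph: cleaning $G$ to minimum degree $\Theta(pn)$ is needed to have room to grow $T$ greedily, and one must check the cleaned subgraph --- possibly on far fewer than $n$ vertices --- still carries $\Omega(p^{\e(H)}n^{\vr(H)})$ copies of $H$. This works out because the guaranteed count $\eta\,\e(G')^{\e(H)}(n')^{\vr(H)-2\e(H)}$ happens to be monotone in the favourable direction in $n'$, using only that $H$ has no isolated vertex; the rest is routine greedy embedding and constant bookkeeping, and as usual the conclusion is meant for $n$ above a threshold depending on the constants.
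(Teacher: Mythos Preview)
Your proof is correct and follows essentially the same approach as the paper's: clean to minimum degree $\Theta(pn)$, count copies of $H$ via its goodness, then extend greedily along a one-degenerate ordering of $T$. The only cosmetic difference is that the paper deletes \emph{edges} incident to low-degree vertices (so $G'$ still has $n$ vertices, with the low-degree ones becoming isolated), which lets it apply $(\alpha,A,\eta)$-goodness directly on $n$ vertices and thereby sidestep your $(n')^{\vr(H)-2\e(H)} \ge n^{\vr(H)-2\e(H)}$ step and the attendant ``$H$ has no isolated vertices'' assumption.
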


\begin{proof}

Let $C' = \max\{2 C, 4 (\vr(H) + \vr(T))\}$ and $G$ be a graph with at least $pn^2$ edges with $pn^2 \geq C' n^{ 1 + \alpha}$. Let $G'$ be the subgraph of $G$ formed by iteratively removing all edges containing a vertex $u$ such that $\deg(u) < \frac{1}{2} pn$. Note that $\e(G') \geq \frac{1}{2} \e(G) \geq  C n^{ 1 + \alpha}$.

Notice that given any vertex of the tree, namely $v'$, there is an one-degenerate ordering of the vertices starting at that vertex. Let $v_0, \dots ,v_{\e(T)}$ be one such ordering. We will iterate over this ordering, letting $H_i$ be the graph formed from $H$ and the tree induced by the first $i$ elements by identifying $v$ with $v_0$. 

In the base case $i = 0$, by our assumptions on $H$, there are at least $\frac{\beta}{2^{\e(H)}}p^{\e(H)}n^{\vr(H)}$ many copies of $H$. 

By induction, we may assume that we have found $\frac{\beta}{2^{\e(H) + 2i}}n^{\vr(H) + i}p^{\e(H) + i}$ many copies of $H_i$. Fix one such copy $H'$, with $u$ the image the parent of $v_{i + 1}$. To extend to a copy of $H_{i + 1}$, we must find an edge $e \in E(G')$ such that $e \cap V(H') = u$. By our choice of $G'$, $\deg(v) \geq \frac{1}{2} pn \geq 2 (\vr(H) + \vr(T)) n^{\alpha}$. Thus, we have that at least $\frac{1}{4} p n$ edges intersect $H'$ only in $u$. As every one of these edges gives at least one labeled copy of $H_{i + 1}$, we have at least $\frac{\beta}{2^{\e(H) + 2(i  + 1)}}n^{\vr(H)+ i + 1}p^{\e(H)+i + 1}$ labeled copies of $H_i$ in $G$. Observing that $H_{\e(T)} = H^*$ finishes the proof.

\end{proof}

This lemma together with Theorem~\ref{mixedfamilies} gives the best known result for the following sequence of graphs. Recall the definition of a cycle blow-ups of a tree. Let $T$ be a tree on $[n]$ and $\mathcal{C} = \{C_1, \dots C_n\}$ be a family of disjoint even cycles with for every edge $ij \in E(T)$, a pair of vertices $v_{ij}^i \in C_i$ and $v_{ij}^j$ marked. Let $T[\mathcal{C}]$ be the graph formed by from the graphs $C_i \in \mathcal{C}$ by identifying joining the vertices $v_{ij}^i$ and $v_{ij}^j \in C_j$ by vertex-disjoint paths of length $k_{ij}$ for every $ij \in E[T]$. 
\begin{theorem}[Restatement of Theorem~\ref{cycle-blowups}]
Let $\ell$ be any positive integer. Given $T$ a tree on $[n]$ and $\mathcal{C} = \{C_1, \dots C_n\}$ be a family of disjoint even cycles with $|C_i| \geq 2 \ell$ for all $i$, then $T[\mathcal{C}]$ is $(\frac{1}{\ell}, C, \beta)$-supersaturated for some $C, \beta$.
\end{theorem}
\begin{proof}
We proceed by induction on the number of vertices of $T$. The statement is true for all choices of $T, \mathcal{C}$ with $T$ having one vertex by Theorem 1.2 in \cite{jiang2020supersaturation,FS}. 

Assume the statement holds for all choices of $T, \mathcal{C}$ with $\vr(T) = r - 1$, we will show this implies it holds for all $T, \mathcal{C}$ with $\vr(T) = t$ vertices. Pick any leaf vertex $i$ of $T$, and let $T' := T \setminus{i}$. Let $j$ be the unique neighbor of $i$ in $T$.  By induction, the graph $ T'[\mathcal{C} \setminus \{C_i\}]$ is  $(\frac{1}{\ell}, C', \beta')$-supersaturated. Thus, by Lemma~\ref{gluewithtree}, the graph $H$ formed by gluing a path $P$ of length $k_{ij} + 1$ to $T'[\mathcal{C} \setminus \{C_i\}]$ at the vertex $v_{ij}^j$ is $(\frac{1}{\ell}, C'', \beta'')$ -supersaturated for some $C'', \beta''$. Then, by Theorem~\ref{mixedfamilies}, the unique graph $H'$ formed by gluing $H$  with a vertex disjoint copy of $C_i$ along any edge adjacent to $v_{ij}^i$ in $C_i$ and the last edge of $P$ is $(\alpha , C, \beta)$-good for some $C, \beta$ with $\alpha = \max\{\frac{1}{\ell}, \frac{2}{|C_i|}\} = \frac{1}{\ell}$. 
\end{proof}

\section{Acknowledgments}
We thank Istv\'an Tomon for pointing out Theorem 1.1 from \cite{kupavskii2022extremal}. We thank David Conlon for bringing our attention to references \cite{li2011logarithimic, szegedy2014information}. We also thank Tao Jiang for letting us know about~\cite{Qiu} and helpful comments.

\bibliography{bibliography}

\bibliographystyle{abbrv}

\end{document}